\title{On three outer-independent domination related parameters in graphs\footnote{This work was done while the third author (B. Samadi) was visiting the University of Cadiz, supported by the University of Mazandaran and the Ministry of Science, Research and Technology of Iran, under the program Ph.D. Research Opportunity.}}
\author{Doost Ali Mojdeh$^1$, Iztok Peterin$^2$, Babak Samadi$^3$ and Ismael G. Yero$^4$\\[0.5cm]
Department of Mathematics, University of Mazandaran,\\ Babolsar, Iran$^{1,3}$\\
{\it damojdeh@umz.ac.ir$^1$,} {\it samadibabak62@gmail.com$^3$}\\[0.2cm]
Faculty of Electrical Engineering and Computer Science, University of Maribor,\\
Maribor, Slovenia\\
{\it iztok.peterin@um.si$^2$}\\[0.2cm]
Departamento de Matem\'{a}ticas, Universidad de C\'{a}diz,\\ Algeciras, Spain$^4$\\
{\it ismael.gonzalez@uca.es$^4$}}
\date{}
\newtheorem{theorem}{Theorem}[section]
\newtheorem{corollary}[theorem]{Corollary}
\newtheorem{lemma}[theorem]{Lemma}
\newtheorem{proposition}[theorem]{Proposition}
\newtheorem{p}{Problem}
\theoremstyle{definition}
\theoremstyle{remark}
\begin{document}

\maketitle

\begin{abstract}
Let $G$ be a graph and let $S\subseteq V(G)$. The set $S$ is a double outer-independent dominating set of $G$ if $|N[v]\cap D|\geq2$, for all $v\in V(G)$, and $V(G)\setminus S$ is independent. Similarly, $S$ is a $2$-outer-independent dominating set, if every vertex from $V(G)\setminus S$ has at least two neighbors in $S$ and $V(G)\setminus S$ is independent. Finally, $S$ is a total outer-independent dominating set if every vertex from $V(G)$ has a neighbor in $S$ and the complement of $S$ is an independent set. The double, total or $2$-outer-independent domination number of $G$ is the smallest possible cardinality of any double, total or $2$-outer-independent dominating set of $G$, respectively. In this paper, the $2$-outer-independent, the total outer-independent and the double outer-independent domination numbers of graphs are investigated. We prove some Nordhaus-Gaddum type inequalities, derive their computational complexity and present several bounds for them.
\end{abstract}
{\bf Keywords:} Total outer-independent domination; $2$-outer-independent domination; double outer-independent domination; Nordhaus-Gaddum inequality.\vspace{1mm}\\
{\bf MSC 2010:} 05C69.


\section{Introduction and preliminaries}

Problems concerning vertex domination and vertex independence in graphs are nowadays very common research topics on the graph theory community. There is a high number of open problems in this area of graph theory, and a rich literature on this issue can be found. Such problems deal with a wide range of situations, which cover theoretical points of view as well as several practical applications in real life situations. One of the most remarkable aspects of domination and/or independence in graphs involves the close relationship between them, throughout the study of different parameters which relate them in several distinct ways. Probably, the most common combination of domination and vertex independence of graphs concerns the independent dominating sets, \emph{i.e.}, dominating sets which are also independent. In this work, we center our attention into studying three different ways of combining domination with independence in graphs. Such combinations are related with finding the sets of smallest cardinality of a domination related parameter whose complement is an independent set.

Throughout this paper, we consider $G$ as a finite simple graph with vertex set $V(G)$ and edge set $E(G)$. We follow \cite{w} as a reference for terminology and notation which is not explicitly defined here. The {\em open neighborhood} of a vertex $v$ consists of all neighbors of $v$ and is denoted by $N_G(v)$. The {\em closed neighborhood} of $v$ is $N_G[v]=N_{G}(v)\cup \{v\}$. The {\em minimum} and {\em maximum degree} of $G$ are denoted by $\delta(G)$ and $\Delta(G)$, respectively. A vertex of degree one is called a {\em leaf} and a neighbor of a leaf is its {\em support} vertex. A {\em claw-free} graph is a graph that does not have a star $K_{1,3}$ as an induced subgraph. Let $A$ and $B$ be two subsets of $V(G)$. We consider $[A,B]$ as the set of edges having one end point in $A$ and the other in $B$. For $S\subseteq V(G)$ we denote by $G[S]$ a subgraph induced by $S$.

A set $S\subseteq V(G)$ is a {\em dominating set} (resp. a {\em total dominating set}) if each vertex in $V(G)\backslash S$ (resp. $V(G)$) has at least one neighbor in $S$. The {\em domination number} $\gamma(G)$ (resp. {\em total domination number} $\gamma_{t}(G)$) is the smallest possible cardinality of a dominating set (resp. total dominating set) in $G$. For more information on domination and total domination we suggest the books \cite{hhs} and \cite{book-total-dom}, respectively. The domination theory is a very large area in graph theory. The amount of parameters on domination is huge and it daily continues growing up. Variations on the domination property or combinations with other parameters are probably the most used features in the new versions of each new domination invariant. In our work, we deal with three versions of domination which are making both actions at the same time. These three parameters have been relatively recently introduced and not many results concerning them are available. In this sense, it is our goal to make several contributions on their knowledge.

A subset $D\subseteq V(G)$ is said to be a {\em double dominating set} of the graph $G$ if $|N[v]\cap D|\geq2$, for all $v\in V(G)$. The {\em double domination number} of $G$, denoted by $\gamma_{\times2}(G)$, is the smallest possible cardinality of a double dominating set in $G$. The concept of double domination was first introduced by Harary and Haynes in \cite{hh}. Our first parameter of interest is as follows. A {\em double outer-independent dominating set} (DOID set) of $G$ is a double dominating set $D$ in $G$ such that $V(G)\setminus D$ is independent. Moreover, the {\em double outer-independent domination number} (DOID number) $\gamma_{d}^{oi}(G)$ of $G$ is the smallest possible cardinality taken over all DOID sets of $G$. This graph parameter was introduced in \cite{k1}.

Our second goal on this work comes as follows. A {\em $2$-outer-independent dominating set} ($2$OID set) in a graph $G$ is a subset $S\subseteq V(G)$ such that every vertex in $V(G)\setminus S$ has at least two neighbors in $S$, and the set $V(G)\setminus S$ is independent. The {\em $2$-outer-independent domination number} ($2$OID number) $\gamma_{2}^{oi}(G)$ of a graph $G$ is the smallest possible cardinality of a $2$OID set in $G$. $2$-outer-independent domination in graphs was first introduced in \cite{jk} by Jafari Rad and Krzywkowski.

The third and last parameter of interest in this work concerns total domination. That is, a total dominating set $S$ in $G$ is said to be a {\em total outer-independent dominating set} (TOID set) if $V(G)\setminus S$ is independent. The {\em total outer-independent domination number} (TOID number) of $G$, denoted by $\gamma_{t}^{oi}(G)$, is the smallest possible cardinality of a TOID set in $G$. This concept was initiated by Krzywkowski in \cite{k2}.

For the sake of convenience, given any parameter $\vartheta$ in a graph $G$, a set of vertices of cardinality $\vartheta(G)$ is called a $\vartheta(G)$-set.
In this paper, we investigate the three aforementioned graph parameters $\gamma_{t}^{oi}(G)$, $\gamma_{d}^{oi}(G)$ and $\gamma_{2}^{oi}(G)$. Clearly, the first two parameters are well-defined if and only if $G$ has no isolated vertices. Since the definitions of them are close to each other, we rather study them simultaneously.

We can easily observe by the definitions that $\gamma_{t}^{oi}(G)\leq \gamma_{d}^{oi}(G)$ and $\gamma_{2}^{oi}(G)\leq \gamma_{d}^{oi}(G)$. Moreover, $\gamma_{t}^{oi}(G)=\gamma_{d}^{oi}(G)$ when $\delta(G)\geq 2$. Note that $\gamma_{t}^{oi}(G)$ and $\gamma_{2}^{oi}(G)$ are not comparable in general. For example, $\gamma_{t}^{oi}(K_{1,n-1})=2$ and $\gamma_{2}^{oi}(K_{1,n-1})=n-1$ for $n\geq4$. On the other hand, $\gamma_{t}^{oi}(C_{n})=\lceil2n/3\rceil$ (see \cite{k2}), while $\gamma_{2}^{oi}(C_{n})=\lceil n/2\rceil$ for $n\geq3$.

The article is organized as follows. Section \ref{sect-NG} is dedicated to find several Nordhaus-Gaddum type results concerning the three parameters $\gamma_{t}^{oi}(G)$, $\gamma_{d}^{oi}(G)$ and $\gamma_{2}^{oi}(G)$. In Section \ref{sect-complex}, complexity results concerning computation of the values of $\gamma_{d}^{oi}(G)$ and $\gamma_{2}^{oi}(G)$ are given. In the last section, several bounds for the three mentioned parameters are presented.


\section{Nordhaus-Gaddum type inequalities}\label{sect-NG}

Nordhaus and Gaddum \cite{ng} in 1956, gave lower and upper bounds on the sum and product of the chromatic numbers of a graph and its complement in terms of the order of the graph. Since then, bounds on $\Psi(G)+\Psi(\overline{G})$ or $\Psi(G)\Psi(\overline{G})$ are called Nordhaus-Gaddum inequalities, where $\Psi$ is any graph parameter. The search of Nordhaus-Gaddum type inequalities has centered the attention of a large number of investigations, and in domination theory, this has probably been even more emphasized. For more information about this subject the reader can consult \cite{ah}.

The Nordhaus-Gaddum type inequalities for $\Psi(G)+\Psi(\overline{G})$ whether $\Psi\in\{\gamma_{2}^{oi},\gamma_{d}^{oi},\gamma_{t}^{oi}\}$ where given in \cite{jk}, \cite{k1} and \cite{k2}, respectively. In this section, we concentrate on the product style of Nordhaus-Gaddum type inequalities of such parameters.

This section is divided into three subsections in correspondence with the three graph parameters considered in this paper. We begin with the Nordhaus-Gaddum type inequalities for the TOID number of graphs.

\subsection{TOID number}\label{SSTOID}

In what follows we may always assume that both $G$ and $\overline{G}$ have no isolated vertices, otherwise the studied parameter is not well defined. This clearly implies that $n\geq4$. Krzywkowski \cite{k2} proved the following Nordhaus-Gaddum type inequalities for the sum of the TOID numbers of a graph and its complement.

\begin{lemma}\emph{(\cite{k2})}\label{L1}
For every graph $G$ of order $n$ we have $n-1\leq \gamma_{t}^{oi}(G)+\gamma_{t}^{oi}(\overline{G})\leq 2n-1$. The equality of the upper bound holds if and only if $G\in\{C_{4},2P_{2}\}$.
\end{lemma}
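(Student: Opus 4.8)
The plan is to reduce both inequalities to elementary facts about the independence number, and to pin down the extremal graphs through a structural description of when $\gamma_{t}^{oi}$ attains its trivial extreme values. For the lower bound I would first record the easy observation that for any graph $H$ of order $n$ without isolated vertices, $\gamma_{t}^{oi}(H)\geq n-\alpha(H)$, where $\alpha(H)$ denotes the maximum size of an independent set: if $S$ is any TOID set, then $V(H)\setminus S$ is independent and hence has at most $\alpha(H)$ vertices, so $|S|\geq n-\alpha(H)$. Applying this to $G$ and to $\overline{G}$ and summing gives $\gamma_{t}^{oi}(G)+\gamma_{t}^{oi}(\overline{G})\geq 2n-\bigl(\alpha(G)+\alpha(\overline{G})\bigr)$. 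It then remains to invoke the (elementary) Nordhaus--Gaddum inequality $\alpha(G)+\alpha(\overline{G})\leq n+1$, which follows by noting that a maximum independent set of $G$ and a maximum independent set of $\overline{G}$ (a maximum clique of $G$) can share at most one vertex. This yields the lower bound $n-1$ at once.

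For the upper bound the key preliminary step is to characterize when $\gamma_{t}^{oi}(H)=n$, the trivial maximum (recall $S=V(H)$ is always a TOID set when $H$ has no isolated vertex). First I would observe that TOID sets are closed under taking supersets, so $\gamma_{t}^{oi}(H)=n$ holds exactly when no set $V(H)\setminus\{v\}$ is a TOID set. Since $\{v\}$ is automatically independent, $V(H)\setminus\{v\}$ fails to be a TOID set precisely when it fails to be a total dominating set, i.e. precisely when some vertex has $v$ as its only neighbor. Hence $\gamma_{t}^{oi}(H)=n$ if and only if every vertex of $H$ is the unique neighbor of some leaf; a short argument (apply the condition to such a leaf) then forces each vertex to lie in a $K_2$-component, giving $\gamma_{t}^{oi}(H)=n$ if and only if $H=\frac{n}{2}K_2$. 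With this in hand, the sum equals $2n$ only if both $G$ and $\overline{G}$ equal $\frac{n}{2}K_2$, which is impossible for $n\geq4$ because $\overline{\frac{n}{2}K_2}$ has minimum degree $n-2>1$; therefore $\gamma_{t}^{oi}(G)+\gamma_{t}^{oi}(\overline{G})\leq 2n-1$.

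For the equality characterization, a sum of $2n-1$ (with each summand at most $n$) forces one parameter to equal $n$ and the other $n-1$; say $\gamma_{t}^{oi}(G)=n$, so that $G=\frac{n}{2}K_2$ and $\overline{G}$ is the complete multipartite graph $K_{2,\dots,2}$ with $n/2$ parts of size two (the cocktail-party graph), and we need $\gamma_{t}^{oi}(\overline{G})=n-1$. The crux is thus to compute $\gamma_{t}^{oi}\bigl(K_{2,\dots,2}\bigr)$. Here the independence structure is very restrictive: every independent set lies inside a single part, so $V\setminus S$ has at most two vertices and $\gamma_{t}^{oi}\geq n-2$; moreover, for at least three parts, deleting one entire part yields a valid TOID set, giving value $n-2$, whereas for exactly two parts $K_{2,2}=C_4$ and $\gamma_{t}^{oi}(C_4)=3=n-1$. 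Hence $\gamma_{t}^{oi}(\overline{G})=n-1$ forces $n=4$, leaving exactly $G\in\{2P_2,C_4\}$ once complementation is taken into account. I expect this last computation---showing that the cocktail-party graph drops to $n-2$ as soon as there are three or more parts, while $C_4$ remains at $n-1$---to be the main obstacle, since it is precisely where the special role of $n=4$ (and the exact list $\{C_4,2P_2\}$) enters.
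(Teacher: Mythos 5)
Your proof is correct and complete. Note, though, that the paper itself does not prove this lemma: it is quoted from \cite{k2}, and the only portion the authors re-derive is the lower bound, embedded in the proof of Theorem \ref{T3} via inequality (\ref{E4}), where they observe that $(V(G)\setminus S)\cap(V(G)\setminus\overline{S})$ is simultaneously independent in $G$ and in $\overline{G}$, hence has at most one vertex. Your lower-bound argument is that same counting in different clothing: $\gamma_{t}^{oi}(H)\geq n-\alpha(H)$ combined with $\alpha(G)+\alpha(\overline{G})\leq n+1$, the latter proved by exactly the clique-versus-independent-set intersection observation. Where your write-up goes genuinely beyond the paper is in the upper bound and the equality characterization, which the authors take entirely on faith from \cite{k2}: your superset-closure observation reducing $\gamma_{t}^{oi}(H)=n$ to the condition that every vertex is the unique neighbor of some leaf, the resulting identification of the extremal graphs as disjoint unions of $K_{2}$'s, and the computation that the cocktail-party graph $K_{2,\dots,2}$ has $\gamma_{t}^{oi}=n-2$ once there are at least three parts while $\gamma_{t}^{oi}(C_{4})=3=n-1$, are all sound and correctly isolate $n=4$ and the pair $\{C_{4},2P_{2}\}$ (the forced split of $2n-1$ into $n$ and $n-1$, together with complementation symmetry, closes the argument). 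In short, you have supplied a self-contained proof of a statement the paper only cites; its lower-bound half coincides in substance with the paper's own re-proof inside Theorem \ref{T3}, and its upper-bound half is material the paper does not contain.
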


The next known result is also necessary for our purposes in this section.

\begin{lemma}\emph{(\cite{k2})}\label{L2}
For a connected graph $G$ we have $\gamma_{t}^{oi}(G)=n-1$ if and only if $G\in\{P_{3},C_{4},C_{5}\}$ or $G$ is a complete graph on at least three vertices.
\end{lemma}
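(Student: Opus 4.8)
The plan is to recast the equality $\gamma_{t}^{oi}(G)=n-1$ as a purely local neighbourhood condition, and then classify the graphs meeting it. First I would note that every connected $G$ on $n\ge 3$ vertices satisfies $\gamma_{t}^{oi}(G)\le n-1$: taking $v$ to be a leaf if one exists and an arbitrary vertex otherwise, the set $V(G)\setminus\{v\}$ is readily checked to be a TOID set. The key reformulation is then this. For an independent set $I$ with $|I|\ge 2$, the set $V(G)\setminus I$ is total dominating exactly when no vertex $u\notin I$ has $N(u)\subseteq I$; and such an $I$ exists if and only if there is a non-adjacent pair $\{x,y\}$ with no vertex $u\notin\{x,y\}$ satisfying $N(u)\subseteq\{x,y\}$ (pass to any two elements of $I$, using independence of $I$ to exclude witnesses lying inside $I$). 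Consequently $\gamma_{t}^{oi}(G)\le n-2$ is equivalent to the existence of such a pair, so that $\gamma_{t}^{oi}(G)=n-1$ is equivalent to the condition $(\ast)$: for every pair of non-adjacent vertices $x,y$ there is a third vertex $u$ with $N(u)\subseteq\{x,y\}$, i.e.\ either a leaf attached to $x$ or $y$, or a common neighbour of $x$ and $y$ of degree exactly $2$.

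The reverse implication is then routine: $K_n$ with $n\ge 3$ satisfies $(\ast)$ vacuously (it has no non-adjacent pair), and since an independent complement in $K_n$ has at most one vertex one gets $\gamma_{t}^{oi}(K_n)=n-1$; for $P_3,C_4,C_5$ one verifies $(\ast)$ directly, or invokes $\gamma_{t}^{oi}(C_n)=\lceil 2n/3\rceil$. For the forward implication I would assume $(\ast)$ and $G$ connected, record the complete case as $K_n$, and otherwise (with $G$ non-complete, so a non-adjacent pair exists) split according to whether $G$ has a leaf.

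If $G$ has a leaf $\ell$ with support $s$, I would first show $\deg(s)=2$. Indeed $\deg(s)\ge 2$ by connectivity; and if $\deg(s)\ge 3$, then $(\ast)$ applied to each pair $\{\ell,z\}$ with $z\notin\{s,\ell\}$ cannot be witnessed by $s$ (as $N(s)\not\subseteq\{\ell,z\}$), forcing every such $z$ to carry a private leaf, after which $(\ast)$ on a pair of non-adjacent leaves has no admissible witness, a contradiction. Writing $N(s)=\{\ell,t\}$, if $n\ge 4$ there is a vertex outside $\{\ell,s,t\}$, and the same mechanism forces it to carry a leaf $u_0$, whereupon $(\ast)$ applied to $\{\ell,u_0\}$ fails. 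Hence $n=3$ and $G=P_3$.

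If $G$ has no leaf, then $\delta(G)\ge 2$, so every witness in $(\ast)$ must be a degree-$2$ common neighbour; in particular $G$ has diameter at most $2$. The crux is to prove $\Delta(G)\le 2$. Assuming a vertex $v$ with $\deg(v)\ge 3$, I would argue in two subcases. If $N[v]$ is a clique, a vertex $w$ at distance $2$ from $v$ gives a non-adjacent pair $\{v,w\}$ whose common neighbours all lie in the clique and hence have degree $\ge 3$, contradicting $(\ast)$. If instead two neighbours $a_i,a_j$ of $v$ are non-adjacent, then tracking the degree-$2$ witnesses successively forced by $(\ast)$ on $\{a_i,a_j\}$, then on $\{u,v\}$, and finally on $\{a_i,a_k\}$ for a third neighbour $a_k$ of $v$ pins $a_i$ to degree $2$ and leaves this last pair with no valid witness. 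Thus $\Delta(G)=2$, so $G$ is a cycle, and non-completeness together with $(\ast)$ (equivalently $\gamma_{t}^{oi}(C_n)=\lceil 2n/3\rceil$) restricts $G$ to $C_4$ or $C_5$. I expect this degree bound in the leafless case, chasing the forced degree-$2$ witnesses to a contradiction, to be the main obstacle, as it is where the rigidity of $(\ast)$ must be exploited most carefully.
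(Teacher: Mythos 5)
This lemma is stated in the paper as a known result quoted from \cite{k2} (Krzywkowski's preprint on total outer-independent domination); the paper itself contains no proof of it, so there is no internal argument to compare yours against. Judged on its own, your proof is correct and self-contained. The reformulation is the right engine: for connected $G$ with $n\ge 3$ one always has $\gamma_t^{oi}(G)\le n-1$, and $\gamma_t^{oi}(G)\le n-2$ holds exactly when some non-adjacent pair $\{x,y\}$ admits no vertex $u\notin\{x,y\}$ with $N(u)\subseteq\{x,y\}$ (your two-way argument, reducing an arbitrary independent complement $I$ to two of its vertices and using independence of $I$ to rule out witnesses inside $I$, is sound). Hence $\gamma_t^{oi}(G)=n-1$ is equivalent to your condition $(\ast)$, and the classification follows. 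I verified the two places where your write-up is most compressed and both go through: in the leaf case, $\deg(s)\ge 3$ forces a leaf at every $z\notin\{s,\ell\}$, and then the pair consisting of $\ell$ and such a leaf $u_1$ has no witness ($s$ has degree $\ge 3$, the support $z_1$ of $u_1$ cannot have degree $1$ by connectivity, and $N(\ell)\cap N(u_1)=\emptyset$); in the leafless case, the witness chain $\{a_i,a_j\}\to u$ with $N(u)=\{a_i,a_j\}$, then $\{u,v\}\to$ (say) $a_i$ with $N(a_i)=\{u,v\}$, then $\{a_i,a_k\}$, indeed terminates with no admissible witness, since the only candidates lie in $N(a_i)=\{u,v\}$, where $u\not\sim a_k$ and $\deg(v)\ge 3$. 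Two trivial edge matters you leave implicit but which cost nothing: $n=2$ (i.e.\ $K_2$) fails the equality, which is why ``at least three vertices'' appears in the statement, and $C_3=K_3$ is absorbed into the complete case before you conclude $C_4$ or $C_5$ for non-complete cycles.
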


We make use of Lemma \ref{L1} and Lemma \ref{L2} as tools in order to give some Nordhaus-Gaddum type inequalities for the product of the TOID numbers of a graph and its complement, and give the characterizations of all graphs for which the equality of the lower bound and the upper bound holds. For the sake of completeness, we next characterize all graphs $G$ attaining the bound $n-1\leq \gamma_{t}^{oi}(G)+\gamma_{t}^{oi}(\overline{G})$. To this aim, we present a different proof for such bound in the proof of the next theorem.

We first define a family $\Lambda$ of graphs $G$ of order $n\geq 5$ as follows. We begin with a path $P_{2}$ on the vertices $a$ and $b$. Add a set $K$ of $n-2$ new vertices. Add some edges with one end point in $\{a,b\}$ and the other one in $K$, provided that we have a partition $\{A,B,C\}$ of $K$ as follows: $A=\{v\in K: \mbox{$v$ is only adjacent to $a$}\}$, $B=\{v\in K: \mbox{$v$ is only adjacent to $b$}\}$ and $C=\{v\in K: \mbox{$v$ is adjacent to both $a$ and $b$}\}$. Notice that $A,B$ and $C$ must be nonempty as they form a partition.

If $n=4$, then $(G,\overline{G})\in \{(P_{4},P_{4}),(C_{4},2P_{2})\}$ and so, $\gamma_{t}^{oi}(G)\cdot\gamma_{t}^{oi}(\overline{G})$ equals either to $4$ or to $12$, respectively. Thus, in what follows we may assume that $n\geq 5$.

\begin{theorem}\label{T3}
If $G$ is a graph $G$ of order $n\geq5$, then
$$2n-6\leq \gamma_{t}^{oi}(G)\cdot\gamma_{t}^{oi}(\overline{G})\leq (n-1)^{2}.$$
Moreover, there is equality in the lower bound if and only if $G\in \Lambda$ or $\overline{G}\in \Lambda$ and, in the upper bound if and only if $G=C_{5}$.
\end{theorem}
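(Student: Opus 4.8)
Write $a=\gamma_{t}^{oi}(G)$ and $b=\gamma_{t}^{oi}(\overline{G})$. Two elementary facts drive the whole argument. First, a total dominating set of a graph without isolated vertices needs at least two vertices, so $a,b\ge 2$ (and trivially $a,b\le n$). Second, Lemma~\ref{L1} gives $n-1\le a+b\le 2n-1$. For the lower bound the plan is to use the identity $ab=(a-2)(b-2)+2(a+b)-4$. Since $a,b\ge 2$ the first summand is nonnegative, so $ab\ge 2(a+b)-4\ge 2(n-1)-4=2n-6$. Equality forces simultaneously $(a-2)(b-2)=0$ and $a+b=n-1$; as the product is symmetric under interchanging $G$ and $\overline{G}$, I may assume $a=\gamma_{t}^{oi}(G)=2$ and $b=\gamma_{t}^{oi}(\overline{G})=n-3$.

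The substantial part is the equality analysis for the lower bound. First I would characterize $\gamma_{t}^{oi}(G)=2$: a TOID set $\{x,y\}$ of size two must satisfy $x\sim y$ (for total domination), must dominate every remaining vertex, and must leave $K:=V(G)\setminus\{x,y\}$ independent. Renaming $x,y$ as $a,b$ and partitioning $K$ according to adjacency to $a$ only, to $b$ only, or to both, reproduces exactly the construction of $\Lambda$, except that $A,B,C$ are not yet known to be nonempty. I would then pass to $\overline{G}$ and read off its structure: $K$ becomes a clique, $a\not\sim b$, $N_{\overline{G}}(a)=B$, $N_{\overline{G}}(b)=A$, while each vertex of $C$ loses both $a$ and $b$. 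The standing hypothesis that $\overline{G}$ has no isolated vertex already forces $A\ne\emptyset$ and $B\ne\emptyset$.

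To finish the lower-bound equality I would compute $\gamma_{t}^{oi}(\overline{G})$. Since $K$ is a clique in $\overline{G}$, every independent set meets $K$ in at most one vertex, so the largest independent set of $\overline{G}$ has size $3$, realized by $\{a,b,c\}$ with $c\in C$, when $C\ne\emptyset$, and size $2$ otherwise. Hence $\gamma_{t}^{oi}(\overline{G})\ge n-3$, and I would verify that when $C\ne\emptyset$ the set $S=K\setminus\{c\}$ is a TOID set of $\overline{G}$, giving $\gamma_{t}^{oi}(\overline{G})=n-3$, whereas $C=\emptyset$ forces $\gamma_{t}^{oi}(\overline{G})\ge n-2$. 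Therefore $b=n-3$ holds precisely when $A,B,C$ are all nonempty, that is $G\in\Lambda$; the interchanged case yields $\overline{G}\in\Lambda$. I expect this step, computing $\gamma_{t}^{oi}(\overline{G})$ and isolating the role of $C\ne\emptyset$, to be the main obstacle.

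For the upper bound I would split on whether $a,b\le n-1$. If so, then $ab\le(n-1)^{2}$ at once, with equality demanding $a=b=n-1$. The only alternative is (say) $a=n$; here I would show that $\gamma_{t}^{oi}(G)=n$ forces $G=\tfrac{n}{2}K_{2}$, because no $V(G)\setminus\{v\}$ can be a TOID set, so every vertex is the unique neighbour of some vertex, which makes every vertex a leaf. For $\overline{G}=\overline{\tfrac{n}{2}K_{2}}$ one checks directly that removing a single matched pair leaves a TOID set, so $b\le n-2$ and $ab\le n(n-2)<(n-1)^{2}$. Thus equality in the upper bound requires $a=b=n-1$. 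Finally, at least one of $G,\overline{G}$ is connected, and applying Lemma~\ref{L2} to that one (which has $\gamma_{t}^{oi}=n-1$ and $n\ge 5$) leaves only $C_{5}$ and the complete graph; the latter is excluded since its complement has isolated vertices, and $\overline{C_{5}}=C_{5}$ also satisfies $\gamma_{t}^{oi}=n-1$, so $G=C_{5}$.
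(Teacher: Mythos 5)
Your proof is correct, and its skeleton is the same as the paper's: the numeric lower bound comes from $\gamma_{t}^{oi}\geq 2$ together with the sum bound, the equality case rests on the structure of graphs with $\gamma_{t}^{oi}(G)=2$, and the upper bound is obtained by ruling out the value $n$ and then invoking Lemma~\ref{L2}. Two steps are executed differently, and both differences are worth noting. For the bound itself, the paper ``minimizes the product subject to the sum being exactly $n-1$,'' which is slightly informal because the sum may exceed $n-1$; your identity $ab=(a-2)(b-2)+2(a+b)-4$ covers the whole region $a,b\geq 2$, $a+b\geq n-1$ in one stroke and delivers the equality conditions $(a-2)(b-2)=0$ and $a+b=n-1$ for free. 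For the equality characterization, after fixing the $\gamma_{t}^{oi}(G)=2$ structure (the partition $A$, $B$, $C$ of $K$), the paper takes a minimum TOID set $\overline{S}$ of $\overline{G}$ of size $n-3$ and shows by a counting contradiction that exactly one vertex of $K$ lies outside $\overline{S}$ and that this vertex must lie in $C$; you instead compute $\alpha(\overline{G})$ exactly (it is $3$ if $C\neq\emptyset$ and $2$ otherwise, since $K$ induces a clique in $\overline{G}$) and combine $\gamma_{t}^{oi}(\overline{G})\geq n-\alpha(\overline{G})$ with the explicit TOID set $K\setminus\{c\}$, $c\in C$. Your route yields the cleaner standalone statement that, given the $\gamma_{t}^{oi}(G)=2$ structure, one has $\gamma_{t}^{oi}(\overline{G})=n-3$ if and only if $C\neq\emptyset$, so the forward implication and the converse ($G\in\Lambda$ implies equality) come out of a single computation, whereas the paper argues the two directions separately. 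In the upper bound, your treatment of $\gamma_{t}^{oi}(G)=n$ (every vertex is the unique neighbour of some vertex, hence $G$ is a disjoint union of copies of $P_{2}$) just makes explicit what the paper declares ``easy to see''; otherwise the two arguments coincide.
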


\begin{proof}
Suppose first that $\gamma_{t}^{oi}(G)=\gamma_{t}^{oi}(\overline{G})=n$. It is easy to see that, both $G$ and $\overline{G}$ must be the disjoint copies of $P_{2}$, which is impossible. Therefore, $\gamma_{t}^{oi}(G)\cdot\gamma_{t}^{oi}(\overline{G})\leq n(n-1)$. Suppose now that there exists a graph $G$ of order $n$ for which $\gamma_{t}^{oi}(G)\cdot\gamma_{t}^{oi}(\overline{G})=n(n-1)$. Without loss of generality, we may assume that $\gamma_{t}^{oi}(G)=n$ and $\gamma_{t}^{oi}(\overline{G})=n-1$. This leads to that $G=tP_{2}$ for some integer $t\geq3$, and so, $\overline{G}$ is a graph obtained from the complete graph $K_{2t}$ by removing a perfect matching of it. Thus, we observe that $\gamma_{t}^{oi}(\overline{G})=n-2$, and therefore $\gamma_{t}^{oi}(G)\cdot\gamma_{t}^{oi}(\overline{G})=n(n-2)<n(n-1)$, which is a contradiction. In consequence, both $\gamma_{t}^{oi}(G)$ and $\gamma_{t}^{oi}(\overline{G})$ are at most $n-1$ which implies the upper bound.

Suppose now that the equality of the upper bound holds. Hence, $\gamma_{t}^{oi}(G)=\gamma_{t}^{oi}(\overline{G})=n-1$. If one of $G$ and $\overline{G}$, say $\overline{G}$, is disconnected, then $G$ is connected. Since $n\geq 5$ and $G$ is connected, $G=K_{n}$ or $G=C_5$ by Lemma \ref{L2}. In the first case $\overline{G}=\overline{K_{n}}$ contains isolated vertices and $\gamma_t^{oi}(\overline{K_{n}})$ does not exists. The second case yields that $\overline{G}=C_5=G$.  Conversely, we clearly have the equality of the upper bound when $G=\overline{G}=C_{5}$.

To prove the lower bound, we first show that  	
\begin{equation}\label{E3}
n-1\leq \gamma_{t}^{oi}(G)+\gamma_{t}^{oi}(\overline{G}).
\end{equation}
Notice that this follows from Lemma \ref{L1}, however we present this short proof as we later need the statement from (\ref{E4}).
Let $S$ and $\overline{S}$ be a $\gamma_{t}^{oi}(G)$-set and a $\gamma_{t}^{oi}(\overline{G})$-set, respectively. Since $V(G)\setminus S$ and $V(G)\setminus \overline{S}$ are independent in $G$ and $\overline{G}$, respectively, we have
\begin{equation}\label{E4}
n-|S|-|\overline{S}|\leq  n-|S\cup \overline{S}|=|(V(G)\setminus S)\cap(V(G)\setminus \overline{S})|\leq1,
\end{equation}
which implies the inequality (\ref{E3}).

Minimizing $\gamma_{t}^{oi}(G)\cdot\gamma_{t}^{oi}(\overline{G})$ subject to $\gamma_{t}^{oi}(G)+\gamma_{t}^{oi}(\overline{G})=n-1$, we have $(\gamma_{t}^{oi}(G),\gamma_{t}^{oi}(\overline{G}))=(2,n-3)$ or $(\gamma_{t}^{oi}(G),\gamma_{t}^{oi}(\overline{G}))=(n-3,2)$. Therefore,
\begin{equation}\label{E5}
\gamma_{t}^{oi}(G)\cdot\gamma_{t}^{oi}(\overline{G})\geq2n-6,
\end{equation}
as desired.

The equality holds in (\ref{E5}) if and only if $(\gamma_{t}^{oi}(G),\gamma_{t}^{oi}(\overline{G}))=(2,n-3)$ or $(\gamma_{t}^{oi}(G),\gamma_{t}^{oi}(\overline{G}))=(n-3,2)$. Without loss of generality, we may assume that $\gamma_{t}^{oi}(G)=2$ and $\gamma_{t}^{oi}(\overline{G})=n-3$. Let $S=\{a,b\}$ be a $\gamma_{t}^{oi}(G)$-set. Since $\gamma_{t}^{oi}(G)=2$, $G$ is isomorphic either to
\begin{itemize}
  \item[{\rm (i)}] a star $K_{1,n-1}$ in which its central vertex belongs to $S$, or
  \item[{\rm (ii)}] a graph obtained from a path $P_{2}$ on the vertices $a$ and $b$, by adding a set $K$ of $n-2$ new vertices and some edges with one end point in $S$, and the other in $K$, such that each vertex in $K$ has at least one neighbor in $S$ and both $a$ and $b$ are not pendant vertices.
\end{itemize}
Since $\overline{G}$ has no isolated vertices, the case (i) is impossible. Again, since isolated vertices are not allowed, at least one vertex in $K=V(G)\setminus S$ is not adjacent to $a$, and at least one vertex in $K=V(G)\setminus S$ is not adjacent to $b$. Also, as $\overline{G}[V(G)\setminus S]$ is an $(n-2)$-clique in $\overline{G}$, at most one vertex in $V(G)\setminus S$ does not belong to $\overline{S}$, which is a $\gamma_t^{oi}(\overline{G})$-set. If $V(G)\setminus S\subseteq\overline{S}$, then $|\overline{S}|\geq n-2$, and so, $\gamma_{t}^{oi}(G)\cdot\gamma_{t}^{oi}(\overline{G})\geq 2n-4$, which is a contradiction. Therefore, there exists a unique vertex $x$ in $V(G)\setminus S$ which is not in $\overline{S}$. Thus, the other $n-3$ vertices of $V(G)\setminus S$ belong to $\overline{S}$. Since $|\overline{S}|=n-3$, we have $\overline{S}=(V(G)\setminus S)\setminus\{x\}$ and so, $V(G)\setminus \overline{S}=\{a,b,x\}$. Now, the independence of the set $\{a,b,x\}$ in $\overline{G}$ shows that $x\in V(G)\setminus S$ is adjacent to both $a$ and $b$ in $G$. Consequently, we have partitioned $V(G)\setminus S$ into three nonempty subsets $A,B$ and $C$, such that $A=\{v:N_G(v)=\{a\}\}$, $B=\{v:N_G(v)=\{b\}\}$ and $C=\{v:N_G(v)=\{a,b\}\}$. This guarantees that $G\in\Lambda$.

Conversely, assume that $G\in \Lambda$ (consider the same terminology of its definition). Hence, $\{a,b\}$ is a $\gamma_{t}^{oi}(G)$-set and let $x\in C$. Observe that $V(G)\setminus\{a,b,x\}$ is a TOID set in $\overline{G}$ of cardinality $n-3$. Therefore, $\gamma_{t}^{oi}(G)\cdot\gamma_{t}^{oi}(\overline{G})\leq 2n-6$, which completes the proof.
\end{proof}

We now construct a family $\Phi$ of graphs $G$ as follows. Consider a complete graph $K_{p}$ on $3\leq p\leq n-2$ vertices and let $x\in V(K_{p})$. Add a set $J$ with $n-p$ new vertices. Add some edges with one end point in $V(K_{p})\setminus\{x\}$ and the other in $J$, such that
\begin{itemize}
  \item[{\rm(i)}] every vertex in $J$ is adjacent to at least one vertex in $V(K_{p})\setminus\{x\}$, and
  \item[{\rm(ii)}] every vertex in $V(K_{p})\setminus\{x\}$ is adjacent to at most $|J|-1$ vertices in $J$.
\end{itemize}
Let $G$ be the obtained graph. We observe that $V(K_{p})\setminus\{x\}$ is a TOID set in $G$.

\begin{theorem}\label{T4}
For a graph $G$ of order $n$ we have $\gamma_{t}^{oi}(G)+\gamma_{t}^{oi}(\overline{G})=n-1$ if and only if $G\in \Phi$.
\end{theorem}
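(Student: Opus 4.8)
The plan is to build directly on the inequality (\ref{E4}) established inside the proof of Theorem~\ref{T3}, since the equation $\gamma_t^{oi}(G)+\gamma_t^{oi}(\overline G)=n-1$ is exactly the case in which both inequalities of (\ref{E4}) become equalities. Let $S$ be a $\gamma_t^{oi}(G)$-set and $\overline S$ a $\gamma_t^{oi}(\overline G)$-set. Equality in (\ref{E4}) forces $|S\cup\overline S|=|S|+|\overline S|=n-1$, so $S\cap\overline S=\emptyset$ and there is a unique vertex $w\notin S\cup\overline S$; moreover $\{w\}=(V(G)\setminus S)\cap(V(G)\setminus\overline S)$. Thus $\{S,\overline S,\{w\}\}$ is a partition of $V(G)$, and this reformulation is the starting point for both directions.

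For the forward implication I would first extract the structural consequences of the two outer-independence conditions. Since $S$ is a TOID set of $G$, the set $V(G)\setminus S=\overline S\cup\{w\}$ is independent in $G$; and since $\overline S$ is a TOID set of $\overline G$, the set $V(G)\setminus\overline S=S\cup\{w\}$ is independent in $\overline G$, i.e.\ a clique in $G$. Setting $K_p:=S\cup\{w\}$ (a clique of size $p=|S|+1$), $x:=w$, and $J:=\overline S$, the independence of $\overline S\cup\{w\}$ in $G$ says precisely that $J$ has no internal edges and that $x$ has no neighbor in $J$; hence $G$ arises from $K_p$ by adding $J$ together with edges only between $V(K_p)\setminus\{x\}=S$ and $J$. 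The bounds $p\ge3$ and $p\le n-2$ follow from $|S|=\gamma_t^{oi}(G)\ge2$ and $|J|=\gamma_t^{oi}(\overline G)\ge2$.

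It then remains to verify conditions (i) and (ii) defining $\Phi$, which is the heart of the argument. Condition (i) is immediate, as total domination of $G$ by $S$ gives every vertex of $J$ a neighbor in $S$. For condition (ii) I would use the dual property: total domination of $\overline G$ by $\overline S=J$ forces every $s\in S$ to have a neighbor in $J$ inside $\overline G$, equivalently a non-neighbor in $J$ inside $G$, so $s$ is adjacent in $G$ to at most $|J|-1$ vertices of $J$. Recognising that condition (ii) is exactly the total-domination requirement for $\overline G$ is the key point and the main obstacle; once this is seen, we conclude $G\in\Phi$.

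For the converse I would start from the observation already recorded after the definition of $\Phi$, namely that $V(K_p)\setminus\{x\}$ is a TOID set of $G$, whence $\gamma_t^{oi}(G)\le p-1$. I would then check that $J$ is a TOID set of $\overline G$: it totally dominates $\overline G$ because $J$ is a clique in $\overline G$ of size at least $2$ (handling the vertices of $J$), because $x$ is joined to all of $J$ in $\overline G$ (handling $x$), and because condition (ii) guarantees each $s\in S$ a $\overline G$-neighbor in $J$ (handling $S$); and its complement $V(K_p)$ is independent in $\overline G$, being a clique in $G$. Hence $\gamma_t^{oi}(\overline G)\le|J|=n-p$, so $\gamma_t^{oi}(G)+\gamma_t^{oi}(\overline G)\le n-1$, and combining with (\ref{E3}) yields the desired equality.
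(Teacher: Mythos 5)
Your proposal is correct and follows essentially the same route as the paper's proof: both directions hinge on equality in (\ref{E4}) producing the partition $\{S,\overline{S},\{w\}\}$, on reading conditions (i) and (ii) of $\Phi$ as the total-domination requirements in $G$ and $\overline{G}$ respectively, and on combining the two explicit TOID sets with the lower bound (\ref{E3}). The only differences are cosmetic: you inline the content of the paper's Claim 1 (closure of $\Phi$ under complementation) by verifying directly that $J$ is a TOID set of $\overline{G}$, and you replace the paper's explicit treatment of orders $n\leq 4$ by the observation that $\gamma_{t}^{oi}\geq 2$ forces $3\leq p\leq n-2$.
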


\begin{proof}
Notice first that for $n\in\{1,2,3\}$ at least one of $\gamma_{t}^{oi}(G)$ and $\gamma_{t}^{oi}(\overline{G})$ does not exists. For $n=4$ both $\gamma_{t}^{oi}(G)$ and $\gamma_{t}^{oi}(\overline{G})$ exists only when $(G,\overline{G})\in \{(P_{4},P_{4}),(C_{4},2P_{2})\}$. Obviously, in these two cases $G$ does not belong to $\Phi$ and $\gamma_{t}^{oi}(G)+\gamma_{t}^{oi}(\overline{G})\neq n-1$. So, we may assume that $n\geq 5$.

Suppose first that $G\in \Phi$. In what follows we prove that $\overline{G}\in \Phi$, as well.\\

\noindent
\textit{Claim 1.} \textit{$G\in \Phi$ if and only if $\overline{G}\in \Phi$.}\\

\noindent
\textit{Proof of Claim 1.} By symmetry it only suffices to show that $G\in \Phi$ implies $\overline{G}\in \Phi$. Note that $\overline{G}[J]$ is a clique in $\overline{G}$. Since $J\cup\{x\}$ is independent in $G$, the vertex $x$ is adjacent to all the other vertices in $J$, when considered as a subset of vertices of the graph $\overline{G}$. So, $\overline{G}[J\cup\{x\}]$ is a clique in $\overline{G}$ of order $n-p+1$. On the other hand, $V(K_{p})$ is an independent set in $\overline{G}$. Thus, $V(K_{p})=V(\overline{G})\setminus J$ is independent in $\overline{G}$. Taking into account the conditions (ii) and (i), respectively, we infer that in $\overline{G}$
\begin{itemize}
  \item[{\rm(i$'$)}] every vertex $v\in V(K_{p})\setminus\{x\}$ has at least one neighbor in $J$; and
  \item[{\rm(ii$'$)}] every vertex in $J$ is adjacent to at most $p-2=|V(K_{p})\setminus\{x\}|-1$ vertices in $V(K_{p})\setminus\{x\}$.
\end{itemize}
Note that $J$, $x$, $V(K_{p})\setminus\{x\}$ in $\overline{G}$ are corresponding to $V(K_{p})\setminus\{x\}$, $x$ and $J$ in $G$, respectively, which shows that $\overline{G}\in \Phi$. ($\square$)

The above argument guarantees that for $G\in \Phi$ also $\overline{G}\in\Phi$. Thus, set $V(K_{p})\setminus\{x\}$ is a TOID set in $G$ and $J$ is a TOID set in $\overline{G}$. Therefore,
$$n-1\leq \gamma_{t}^{oi}(G)+\gamma_{t}^{oi}(\overline{G})\leq|V(K_{p})\setminus\{x\}|+|J|=n-1,$$
implying the equality.

Conversely, suppose that the equality holds. Let $S$ and $\overline{S}$ be a $\gamma_{t}^{oi}(G)$-set and a $\gamma_{t}^{oi}(\overline{G})$-set, respectively. If $(V(G)\setminus S)\cap(V(G)\setminus \overline{S})=\emptyset$, then $\gamma_{t}^{oi}(G)+\gamma_{t}^{oi}(\overline{G})\geq n$ by (\ref{E4}), a contradiction. Again by (\ref{E4}), there exists a unique vertex $w\in(V(G)\setminus S)\cap(V(G)\setminus \overline{S})$. We infer that for $A=(V(G)\setminus S)\setminus\{w\}\subseteq\overline{S}$ and $B=(V(G)\setminus\overline{S})\setminus\{w\}\subseteq S$. We have
$$n-1=2n-|S|-|\overline{S}|-2=|A|+|B|\leq|S|+|\overline{S}|=n-1.$$
Therefore, $A=\overline{S}$ and $B=S$. Moreover, $\overline{G}[A]$ and $G[B]$ are cliques in $\overline{G}$ and $G$, respectively. On the other hand, since $B\cup\{w\}$ is independent in $\overline{G}$, the vertex $w$ is adjacent to all the vertices in $B=S$ in $G$. Since $A=\overline{S}$ is a clique in $\overline{G}$, $A$ is independent in $G$. Since $S$ is a dominating set in $G$, each vertex from $A$ has at least one neighbor in $S$. If there exists a vertex $u\in S$ adjacent to all vertices in $A$, then $u\in B$ has no neighbor in $\overline{S}$ when considered as a dominating set in $\overline{G}$, which is a contradiction. Thus, every vertex in $S$ has at most $|A|-1=p-2$ neighbors in $A$.

We now observe that $w$, $G[B\cup\{w\}]$ and $A$ are corresponding to $x$, $K_{p}$ and $J$, respectively, in the descriptions of the members of $\Phi$. Thus, $G\in \Phi$ and the proof is completed.
\end{proof}

\subsection{2OID number}



In this part we present Nordhaus-Gaddum inequalities for the product of 2OID numbers of $G$ and $\overline{G}$.
In order to characterize all graphs attaining the lower bound in the next theorem, we consider the family $\Psi$ containing all graphs $G$ of order $n\geq6$ obtained as follows. Consider the complete graph $K_{4}$ and let $x\in V(K_{4})$. Add a set $R$ of $n-4$ new vertices. Add some edges with one end point in $V(K_{4})\setminus\{x\}$ and the other in $R$ such that
\begin{itemize}
  \item[{\rm(i)}] every vertex in $R$ is adjacent to at least two vertices in $V(K_{4})\setminus\{x\}$, and
  \item[{\rm(ii)}] every vertex in $V(K_{4})\setminus\{x\}$ is adjacent to at most $|R|-2$ vertices in $R$ (see Figure \ref{fig-psi} for an example).
\end{itemize}

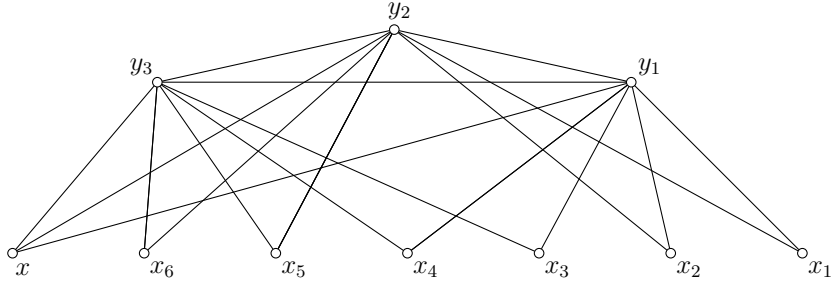
\begin{figure}[ht]
 \centering
\begin{tikzpicture}[scale=.35, transform shape]
\node [draw, shape=circle] (x1) at  (5,7) {};
\node [draw, shape=circle] (x2) at  (0,7) {};
\node [draw, shape=circle] (x3) at  (-5,7) {};

\node [draw, shape=circle] (x4) at  (-10,7) {};
\node [draw, shape=circle] (x5) at  (-15,7) {};
\node [draw, shape=circle] (x6) at  (-20,7) {};
\node [draw, shape=circle] (x7) at  (-25,7) {};

\node [draw, shape=circle] (y1) at  (-1.5,13.5) {};
\node [draw, shape=circle] (y2) at  (-10.5,15.5) {};
\node [draw, shape=circle] (y3) at  (-19.5,13.5) {};

\draw (y1)--(y2)--(y3)--(y1);
\draw (x1)--(y1)--(x2);
\draw (x1)--(y2)--(x5);
\draw (x2)--(y2)--(x5);
\draw (x3)--(y1)--(x4);
\draw (x3)--(y3)--(x6);
\draw (y1)--(x4)--(y3);
\draw (y3)--(x5)--(y2);
\draw (y3)--(x6)--(y2);
\draw (y3)--(x7)--(y2);
\draw (x7)--(y1);

\node [scale=2.4] at (-.8,14.1) {\large $y_{1}$};
\node [scale=2.4] at (-10.3,16.2) {\large $y_{2}$};
\node [scale=2.4] at (-20.1,14.1) {\large $y_{3}$};
\node [scale=2.4] at (5.7,6.4) {\large $x_{1}$};
\node [scale=2.4] at (.7,6.4) {\large $x_{2}$};
\node [scale=2.4] at (-4.3,6.4) {\large $x_{3}$};
\node [scale=2.4] at (-9.3,6.4) {\large $x_{4}$};
\node [scale=2.4] at (-14.3,6.4) {\large $x_{5}$};
\node [scale=2.4] at (-19.3,6.4) {\large $x_{6}$};
\node [scale=2.4] at (-24.6,6.4) {\large $x$};

\end{tikzpicture}\vspace{3mm}
  \caption{A member $G$ of $\Psi$ with $\gamma_{2}^{oi}(G)\cdot\gamma_{2}^{oi}(\overline{G})=18$. Note that $\{y_{1},y_{2},y_{3}\}$ is a $2$OID set in $G$ and $\{x_{1},\cdots,x_{6}\}$ is a $2$OID set in $\overline{G}$.}\label{fig-psi}
\end{figure}

\begin{theorem}\label{TB1}
If $G$ is a graph of order $n\geq4$, then
$$3n-12\leq \gamma_{2}^{oi}(G)\cdot\gamma_{2}^{oi}(\overline{G})\leq n(n-1).$$
Moreover, the equality holds for the lower bound if and only if $G\in \Psi$, and for the upper bound if and only if $G\in\{K_{n},\overline{K_{n}}\}$.
\end{theorem}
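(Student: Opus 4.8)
The plan is to establish the two inequalities separately and then determine the extremal graphs, following closely the pattern of Theorems~\ref{T3} and~\ref{T4}. Two elementary observations will be used throughout. First, $V(H)$ is always a $2$OID set, so $\gamma_{2}^{oi}(H)\le n$, with $\gamma_{2}^{oi}(H)=n$ precisely when $\Delta(H)\le 1$: if some vertex $v$ has degree at least two then $V(H)\setminus\{v\}$ is a $2$OID set, while a vertex of degree at most one can never be placed outside a $2$OID set. Second, $\gamma_{2}^{oi}(H)\ge 2$ whenever $n\ge 2$, since a single vertex cannot doubly dominate any outer vertex. We also record that $\gamma_{2}^{oi}(K_{m})=m-1$ for $m\ge 3$ and $\gamma_{2}^{oi}(\overline{K_{m}})=m$.

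For the upper bound I would argue that, since $n\ge 4$, the inequalities $\Delta(G)\le 1$ and $\Delta(\overline{G})\le 1$ cannot hold at the same time, so at most one factor equals $n$; if neither does, the product is at most $(n-1)^{2}<n(n-1)$. Hence $\gamma_{2}^{oi}(G)\cdot\gamma_{2}^{oi}(\overline{G})=n(n-1)$ forces (say) $\gamma_{2}^{oi}(G)=n$ and $\gamma_{2}^{oi}(\overline{G})=n-1$, so $G$ is a disjoint union of copies of $K_{2}$ and isolated vertices. If $G$ had an edge $ab$, then $V(G)\setminus\{a,b\}$ would be a $2$OID set of $\overline{G}$ (in $\overline{G}$ both $a$ and $b$ are joined to all of the remaining $n-2\ge 2$ vertices, and $\{a,b\}$ is independent there), giving $\gamma_{2}^{oi}(\overline{G})\le n-2$, a contradiction. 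Thus $G$ is edgeless, i.e.\ $G=\overline{K_{n}}$ and $\overline{G}=K_{n}$; by symmetry the extremal graphs are exactly $K_{n}$ and $\overline{K_{n}}$.

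For the lower bound I would first reproduce the intersection inequality of~(\ref{E4}): for optimal $2$OID sets $S,\overline{S}$ of $G,\overline{G}$, the set $(V(G)\setminus S)\cap(V(G)\setminus\overline{S})$ is independent simultaneously in $G$ and in $\overline{G}$, hence has at most one vertex, which yields $\gamma_{2}^{oi}(G)+\gamma_{2}^{oi}(\overline{G})\ge n-1$. Since the product is minimised at the extremes of the feasible interval, the decisive question is how small one factor can be. If both factors are at least $3$, then the minimum under the sum constraint is attained at $(3,n-4)$ and equals $3n-12$. For the equality discussion I would then run the converse analysis of Theorem~\ref{T4}: equality forces the sum to be exactly $n-1$ and the common intersection to be a single vertex $w$; the side of size $3$ together with $w$ induces a clique $K_{4}$, the other side is exactly the complementary $2$OID set of size $n-4$, and conditions (i) and (ii) translate the bipartite adjacency between the two sides into the defining description of $\Psi$. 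The converse direction (that each $G\in\Psi$ realises $3n-12$) is already recorded in the remark preceding the theorem, and the orders $n\in\{4,5\}$ are settled by direct inspection.

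The step I expect to be the main obstacle is controlling the case in which a factor is as small as $2$, for there the sum bound alone is far too weak: it only gives the complementary factor $n-3$, and $2(n-3)=2n-6$ falls below $3n-12$ once $n>6$. The point one must exploit is that $\gamma_{2}^{oi}(G)=2$ is highly rigid: a $2$OID set $\{u,v\}$ forces every remaining vertex to be adjacent to both $u$ and $v$ and the outer set to be independent, so $G=K_{2,n-2}$, with or without the edge $uv$; consequently $\overline{G}$ is a clique $K_{n-2}$ together with a disjoint component on $\{u,v\}$, whence $\gamma_{2}^{oi}(\overline{G})=(n-3)+2=n-1$ rather than $n-3$. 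One is thus reduced to comparing $2(n-1)=2n-2$ with $3n-12$, and deciding exactly for which orders this rigid configuration stays above (or meets) the bound is the genuinely delicate heart of the argument; I would expect this comparison, rather than the clean ``both factors at least $3$'' regime, to be where the real work and the sharpest restrictions lie.
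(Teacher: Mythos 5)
Your upper-bound argument is correct and essentially the same as the paper's, and your treatment of the regime where both factors are at least $3$ (giving $3(n-4)=3n-12$ under the sum constraint) also mirrors the paper. The issue is the case you flag at the end as ``the main obstacle'': a factor equal to $2$. This is not a loose end you simply ran out of time on --- it is a genuine hole, and it is exactly the hole in the paper's own proof. The paper minimizes the product subject to the sum being \emph{exactly} $n-1$, shows the pair $(2,n-3)$ is unrealizable, and then concludes that the minimum is $3n-12$, attained at $(3,n-4)$. That reasoning is invalid: to lower-bound the product one must consider all realizable pairs with sum \emph{at least} $n-1$, in particular pairs $(2,y)$ with $y>n-3$, which the paper never examines.

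Your rigidity analysis handles precisely those pairs, and pushing it one line further refutes Theorem \ref{TB1}. As you say, $\gamma_{2}^{oi}(G)=2$ forces $G=K_{2,n-2}$ (possibly with the edge $uv$ added), so $\overline{G}$ is the disjoint union of $K_{n-2}$ with either $K_{2}$ or $\overline{K_{2}}$, whence $\gamma_{2}^{oi}(\overline{G})=(n-3)+2=n-1$ and the product equals $2n-2$. The comparison you left open is one line of arithmetic: $2n-2<3n-12$ if and only if $n\geq 11$. Hence for every $n\geq 11$ the graph $K_{2,n-2}$ violates the claimed lower bound; for instance, for $G=K_{2,9}$ one gets $\gamma_{2}^{oi}(G)\cdot\gamma_{2}^{oi}(\overline{G})=2\cdot 10=20<21=3n-12$. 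Moreover, for $n=10$ the graph $K_{2,8}$ attains $2n-2=18=3n-12$ yet lies outside $\Psi$ (every member of $\Psi$ contains a $K_{4}$, while $K_{2,8}$ is triangle-free), so the equality characterization fails as well. What your argument actually proves is $\gamma_{2}^{oi}(G)\cdot\gamma_{2}^{oi}(\overline{G})\geq\min\{2n-2,\,3n-12\}$, with $K_{2,n-2}$ sharp in the first branch and $\Psi$ in the second. So your proposal cannot be completed into a proof of the statement --- no correct proof exists, because the statement is false for $n\geq 10$ --- but your isolation of the $\gamma_{2}^{oi}(G)=2$ case is exactly what exposes the error; your only shortfall was stopping short of that final one-line comparison.
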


\begin{proof}
We first prove the upper bound. Note that $\gamma_{2}^{oi}(G)=n$ if and only if $\Delta(G)\leq 1$. Therefore, $\gamma_{2}^{oi}(G)\cdot\gamma_{2}^{oi}(\overline{G})=n^{2}$ results in $n\leq 2$, which is not possible. Therefore, $\gamma_{2}^{oi}(G)\cdot\gamma_{2}^{oi}(\overline{G})\leq n(n-1)$.

Clearly, the equality holds when $G\in\{K_{n},\overline{K_{n}}\}$. Now let $\gamma_{2}^{oi}(G)\cdot\gamma_{2}^{oi}(\overline{G})=n(n-1)$. Without loss of generality, we may assume that $(\gamma_{2}^{oi}(G),\gamma_{2}^{oi}(\overline{G}))=(n,n-1)$. So we have $\Delta(G)\leq1$. If $\Delta(G)=0$, then $G=\overline{K_n}$ and $\overline{G}=K_n$ and we are done. So, suppose that there exists at least one edge $ab$ in $G$. In addition, there exist vertices $c$ and $d$ in $G$ because $n\geq 4$. Clearly, $acbda$ forms a four-cycle in $\overline{G}$ and $V(G)-\{a,b\}$ is a 2OID set of $\overline{G}$. Hence, $\gamma_{2}^{oi}(\overline{G})\leq n-2$, a contradiction. Therefore, $G$ is a graph without edges and $\overline{G}$ is a complete graph.

We now prove the lower bound. Similar to the proof of Theorem \ref{T3}, we have $n-1\leq \gamma_{2}^{oi}(G)+\gamma_{2}^{oi}(\overline{G})$. Moreover, by minimizing $\gamma_{2}^{oi}(G)\cdot\gamma_{2}^{oi}(\overline{G})$ subject to $\gamma_{2}^{oi}(G)+\gamma_{2}^{oi}(\overline{G})=n-1$, we have $(\gamma_{2}^{oi}(G),\gamma_{2}^{oi}(\overline{G}))=(2,n-3)$ or $(\gamma_{2}^{oi}(G),\gamma_{2}^{oi}(\overline{G}))=(n-3,2)$. In what follows, we prove that the case $(\gamma_{2}^{oi}(G),\gamma_{2}^{oi}(\overline{G}))=(2,n-3)$ is impossible (and symmetrically the case $(\gamma_{2}^{oi}(G),\gamma_{2}^{oi}(\overline{G}))=(n-3,2)$ is also not possible). Suppose that $S$ is a $\gamma_{2}^{oi}(G)$-set and that $S'$ is a $\gamma_{2}^{oi}(\overline{G})$-set of cardinality $2$ and $n-3$, respectively.
Since $S=\{a,b\}$ is a $2$OID set in $G$ and $|S|=2$, vertices $a$ and $b$ are adjacent to all vertices in $V(G)\setminus S$ in the graph $G$. Thus, $a$ and $b$ have no neighbor in $S'\setminus \{a,b\}$ in $\overline{G}$. Therefore, $a$ and $b$ must both be in $S'$ where they can be adjacent or not. Because $|S'|=n-3$, three vertices, say $x,y$ and $z$, from $V(G)\setminus S$ are outside of $S'$. But $\overline{G}[V(G)\setminus S]$ is a complete graph since $V(G)\setminus S$ is independent in $G$. Thus, $x,y$ and $z$ form a triangle in $\overline{G}[V(G)\setminus S']$, a contradiction with $S'$ being an outer-independent set.
This argument shows that the minimum value of $\gamma_{2}^{oi}(G)\cdot\gamma_{2}^{oi}(\overline{G})$ subject to $\gamma_{2}^{oi}(G)+\gamma_{2}^{oi}(\overline{G})=n-1$ is at least $3n-12$ by choosing $3$ or $n-4$ for $\gamma_{2}^{oi}(G)$. Therefore,
\begin{equation}\label{EQB1}
3n-12\leq \gamma_{2}^{oi}(G)\cdot\gamma_{2}^{oi}(\overline{G}).
\end{equation}

The proof of the statement ``$\gamma_{2}^{oi}(G)\cdot\gamma_{2}^{oi}(\overline{G})=3n-12$ if and only if $G\in \Psi$'' is somewhat similar to those presented in the proofs of Theorem \ref{T3} and Theorem \ref{T4}. However, for the sake of completeness, we prove it here. Suppose first that $G\in \Psi$. It is easy to see that $S=V(K_{4})\setminus\{x\}$ is a $2$OID set in $G$ and $\overline{S}=V(G)\setminus V(K_{4})$ is a $2$OID set in $\overline{G}$. Therefore, $\gamma_{2}^{oi}(G)\cdot\gamma_{2}^{oi}(\overline{G})\leq|S|\cdot|\overline{S}|=3n-12$, implying the equality.

Now assume the equality in (\ref{EQB1}). Hence, $(\gamma_{2}^{oi}(G),\gamma_{2}^{oi}(\overline{G}))=(3,n-4)$ or $(\gamma_{2}^{oi}(G),\gamma_{2}^{oi}(\overline{G}))=(3,n-4)$. We first assume that $(\gamma_{2}^{oi}(G),\gamma_{2}^{oi}(\overline{G}))=(3,n-4)$. Let $S$ be a $\gamma_{2}^{oi}(G)$-set of cardinality 3 and let $S'$ be a $\gamma_{2}^{oi}(\overline{G})$-set of cardinality $n-4$. Since $|S'|=n-4$, there exists four independent vertices $a,b,c,d$ in $\overline{G}$. Clearly $a,b,c,d$ form a clique in $G$. As $V(G)\setminus S$ is an independent set of $G$, $S$ must be a subset of $\{a,b,c,d\}$. We may choose notation so that $S=\{a,b,c\}$. Moreover, $d$ is not adjacent to any vertex from $S'$.
On the other hand, since $S'$ is a $2$OID set in $\overline{G}$, every vertex in $S$ has at most $|(V(G)\setminus S)\setminus\{d\}|-2$ neighbors in $(V(G)\setminus S)\setminus\{d\}$.

It is easily seen that $d$, $G[S\cup\{d\}]$ and $(V(G)\setminus S)\setminus\{d\}$ are corresponding to $x$, $K_{4}$ and $R$, respectively, in the descriptions of the members of $\Psi$. Therefore, $G\in \Psi$. By symmetry, we conclude that $\overline{G}\in \Psi$ when $(\gamma_{2}^{oi}(G),\gamma_{2}^{oi}(\overline{G}))=(n-4,3)$. But $\overline{G}\in \Psi$ is equivalent to $G\in \Psi$ (this can be verified similarly to Claim $1$ in the proof of Theorem \ref{T4}). This completes the proof.
\end{proof}

\subsection{DOID number}

Like in the Subsection \ref{SSTOID} we assume that both $G$ and $\overline{G}$ have no isolated vertices, and this implies that $n\geq4$.

Krzywkowski \cite{k1} proved that $\gamma_{d}^{oi}(G)+\gamma_{d}^{oi}(\overline{G})=2n$ if and only if $G=P_{4}$. Therefore, for a graph $G$ different from $P_{4}$ we have
\begin{equation}\label{EQB2}
\gamma_{d}^{oi}(G)+\gamma_{d}^{oi}(\overline{G})\leq 2n-1.
\end{equation}

He also showed that the equality holds in (\ref{EQB2}) if and only if $G$ or $\overline{G}$ can be obtained from a complete graph on at least three vertices by adding a path $P_{2}$ (on two new vertices) and joining one of its vertices to all but one vertex of the complete graph. We remark that $n^{2}$ is a trivial upper bound on $\gamma_{d}^{oi}(G)\cdot\gamma_{d}^{oi}(\overline{G})$ and $n(n-1)$ is an upper bound for it, when $G$ is different from the path $P_{4}.$ It is easy to observe that the above necessary and sufficient conditions are also equivalent to $\gamma_{d}^{oi}(G)\cdot\gamma_{d}^{oi}(\overline{G})=n^2$ and $\gamma_{d}^{oi}(G)\cdot\gamma_{d}^{oi}(\overline{G})=n(n-1)$, respectively.

A result concerning bounding the product of $\gamma_{d}^{oi}(G)$ and $\gamma_{d}^{oi}(\overline{G})$ from below can be obtained  along similar lines to those in the previous subsections. Indeed, we can bound $\gamma_{d}^{oi}(G)+\gamma_{d}^{oi}(\overline{G})$ from below by $n-1$ and derive the lower bound $3n-12$ for $\gamma_{d}^{oi}(G)\cdot\gamma_{d}^{oi}(\overline{G})$. Moreover, similar to Theorem \ref{TB1} we have the equality if and only if $G\in \Psi$.


\section{Computational complexity}\label{sect-complex}

The problem of computing the TOID number of graphs was proved to be NP-hard in \cite{CHSY}. At next we prove that the complexity of the analogous problems for the $2$OID and the DOID numbers are not having a simpler complexity. That is, we next prove the NP-hardness of both problems. To this end, we first analyze the following decision problems.

$$\begin{tabular}{|l|}
  \hline
  \mbox{$2$-OUTER-INDEPENDENT DOMINATION PROBLEM (2OID problem for short)}\\
  \mbox{INSTANCE: A graph $G$ of order $n\ge 3$ and an integer $r,1\le r\le n$.}\\
  \mbox{QUESTION: Is $\gamma_{2}^{oi}(G)\le r$?}\\
  \hline
\end{tabular}$$

$$\begin{tabular}{|l|}
  \hline
  \mbox{DOUBLE OUTER-INDEPENDENT DOMINATION PROBLEM (DOID problem for short)}\\
  \mbox{INSTANCE: A graph $G$ of order $n\ge 3$ and an integer $r,1\le r\le n$.}\\
  \mbox{QUESTION: Is $\gamma_{d}^{oi}(G)\le r$?}\\
  \hline
\end{tabular}$$

In order to study the complexity of the problems above, we make a reduction from the vertex independence problem stated at next, which is known to be NP-complete from \cite{garey}.

$$\begin{tabular}{|l|}
  \hline
  \mbox{INDEPENDENCE PROBLEM}\\
  \mbox{INSTANCE: A graph $G$ of order $n\ge 3$ and an integer $r, 1\le r\le n$.}\\
  \mbox{QUESTION: Is $\alpha(G)\ge r$?}\\
  \hline
\end{tabular}$$

The next results give our desired reductions for the NP-completeness of the 2OID and DOID problems, respectively.

\begin{theorem}
The 2OID problem is NP-complete.
\end{theorem}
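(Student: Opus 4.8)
The plan is to establish membership in NP and then exhibit a polynomial-time reduction from the INDEPENDENCE problem. Membership in NP is immediate: given $(G,r)$ together with a candidate set $S$, one verifies in polynomial time that $|S|\le r$, that $V(G)\setminus S$ is independent, and that every vertex of $V(G)\setminus S$ has at least two neighbors in $S$. Hence $S$ serves as a succinct certificate, and the 2OID problem lies in NP.

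The reduction rests on a structural reformulation of $2$OID sets that I would isolate first. For any graph $H$ and any $S\subseteq V(H)$, the set $S$ is a $2$OID set of $H$ if and only if $I=V(H)\setminus S$ is independent and every vertex of $I$ has degree at least $2$ in $H$. The point is that, since $I$ is independent, all neighbors of a vertex $u\in I$ automatically lie in $S$, so the requirement ``at least two neighbors in $S$'' collapses to ``degree at least $2$''. Writing $\alpha_{\ge 2}(H)$ for the maximum size of an independent set whose vertices all have degree at least $2$, this yields $\gamma_{2}^{oi}(H)=|V(H)|-\alpha_{\ge 2}(H)$; in particular, if $\delta(H)\ge 2$ then $\gamma_{2}^{oi}(H)=|V(H)|-\alpha(H)$, so on such graphs the $2$OID number directly computes the independence number.

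Given an instance $(G,r)$ of INDEPENDENCE with $|V(G)|=n$, I would construct $G'$ by attaching to each vertex $v\in V(G)$ two new pendant vertices $a_v,b_v$, each adjacent only to $v$. This is clearly polynomial, and $|V(G')|=3n$. Every original vertex now satisfies $\deg_{G'}(v)\ge 2$, while each $a_v,b_v$ has degree $1$; consequently the pendants are excluded from any degree-$\ge 2$ independent set, and the subgraph induced by the original vertices is exactly $G$. Therefore $\alpha_{\ge 2}(G')=\alpha(G)$, whence $\gamma_{2}^{oi}(G')=3n-\alpha(G)$. This gives the equivalence $\alpha(G)\ge r \iff \gamma_{2}^{oi}(G')\le 3n-r$, and since $1\le r\le n$ the target value $3n-r$ lies in the admissible range, completing the reduction.

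I expect the only delicate point to be verifying that the pendants cannot be placed in the complement of a $2$OID set and that the degree condition exactly captures independence in $G$; this is precisely where the implication ``$I$ independent $\Rightarrow$ every neighbor of $I$ lies in $S$'' is used. One should also confirm the boundary cases of originally isolated or degree-one vertices of $G$, which become vertices of degree $2$ or $3$ in $G'$ and behave as required. Everything beyond this reformulation is routine bookkeeping on $|V(G')|=3n$ and on the relation $\gamma_{2}^{oi}(G')=3n-\alpha(G)$.
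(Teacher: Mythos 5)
Your proof is correct, and it takes a genuinely different (and in fact lighter) route than the paper's. Both arguments are reductions from INDEPENDENCE via a per-vertex gadget, but the gadgets and the underlying mechanism differ. The paper attaches to each vertex $v_i$ a copy of $K_{2,3}$, joined to $v_i$ through one vertex $v_i'$ of the bipartition side $A_i$ of size two, and proves $\gamma_{2}^{oi}(G')=3n-\alpha(G)$ by a two-sided counting argument; because each original vertex gains only the single new neighbor $v_i'$, the paper must assume $\delta(G)\geq 1$ so that a vertex of the independent set can find its second neighbor inside $V(G)$. Your reduction instead rests on the structural reformulation that $S$ is a $2$OID set of $H$ if and only if $V(H)\setminus S$ is an independent set all of whose vertices have degree at least $2$ in $H$ (once independence holds, every neighbor automatically lies in $S$), i.e., $\gamma_{2}^{oi}(H)=|V(H)|-\alpha_{\geq 2}(H)$; this is essentially the observation the paper only records later, as equation (8) in the proof of Theorem 4.1. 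With it, attaching two pendant vertices to each vertex of $G$ suffices: the pendants are excluded from the complement by the degree condition, the original vertices induce exactly $G$, and no minimum-degree hypothesis on $G$ is needed, so your reduction is smaller ($3n$ versus $6n$ vertices) and handles isolated vertices of $G$ for free. What the paper's heavier gadget buys in exchange is uniformity: adding a single edge inside each $A_i$ turns the very same construction into the NP-completeness proof for the DOID problem, whereas your pendant gadget cannot be reused there, since in double outer-independent domination every leaf and every support vertex is forced into the set, which in your $G'$ would force all of $V(G)$ into any DOID set and collapse the reduction.
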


\begin{proof}
The problem is clearly in NP since checking that a given set is indeed a $2$OID set can be done in polynomial time. Now, in order to make our reduction, given a graph $G$ of order $n$ with $\delta(G)\geq1$, we construct a graph $G'$ as follows. For any vertex $v_i\in V(G)$ we add a copy of the complete bipartite graph $K_{2,3}$ (we denote such copy as $K_{2,3}^i$ with bipartition sets $A_i,B_i$ such that $|A_i|=2$ and $|B_i|=3$), and join with an edge the vertex $v_i$ with one vertex $v_i'\in A_i$ of $V(K_{2,3}^i)$. We will prove that $\gamma_{2}^{oi}(G')=3n-\alpha(G)$.

First, let $S$ be a $\alpha(G)$-set and let $S'=(V(G)\setminus S)\cup\left(\bigcup_{i=1}^n A_i\right)$. Clearly, every vertex $w_i\in B_i$ for any $i\in \{1,\dots,n\}$ is adjacent to two vertices of $S'$. Also, since $S$ is an independent set and $G$ has no isolated vertices, its complement (in $G$) is a dominating set in $G$. Thus, any vertex $v_i\in S$ has at least two neighbors in $S'$ (at least one in $V(G)\setminus S$ and the vertex $v'_i\in A_i$). Moreover, we observe that $V(G)\setminus S'$ is independent. As a consequence, $S'$ is a $2$OID set in $G'$ which leads to $\gamma_{2}^{oi}(G')\le |S'|=2n+(n-\alpha(G))=3n-\alpha(G)$.

On the other hand, assume that $D$ is a $\gamma_{2}^{oi}(G')$-set. Let $D_G=D\cap V(G)$ and, for every $i\in \{1,\dots,n\}$, let $D_i=D\cap V(K_{2,3}^i)$. If a vertex $u_i\in A_i$ which is not adjacent to a vertex of $G$ does not belong to $D_i$, then every vertex $x\in B_i$ must belong to $D_i$ (since the complement of $D$ is independent), and so, $|D_i|\ge 3$. On the contrary, if such $u_i$ belongs to $D_i$, then at least one other vertex is required in $D_i$ in order to satisfy the $2$-domination condition for the vertices of $K_{2,3}^i$, and so, $|D_i|\ge 2$. In any case, this latter bound will be satisfied. Moreover, we observe that $V(G)\setminus D_G$ is an independent set, which means $\alpha(G)\ge |V(G)\setminus D_G|=n-|D_G|$, or equivalently, $|D_G|\ge n-\alpha(G)$. Finally, we deduce that
$$\gamma_{2}^{oi}(G')=|D|=|D_G|+\sum_{i=1}^n|D_i|\ge n-\alpha(G)+2n=3n-\alpha(G).$$
Therefore, the desired equality follows, and by taking $j=3n-k$, it is readily seen that $\gamma_{2}^{oi}(G')\leq j$ if and only if $\alpha(G)\geq k$, which completes the proof.
\end{proof}

As a consequence of the result above, we conclude that the problem of computing the $2$OID number of graphs is an NP-hard problem. Now, in order to show an analogous result for the DOID number of graph, a similar reduction to the one above for the $2$OID number can be used. The only necessary change in the proof concerns the construction of the graph $G'$. For the DOID problem, we only need to add an edge between the two vertices of the bipartition sets $A_i$ of every $K_{2,3}^i$ ($i\in\{1,\dots,n\}$). In such case, the proof stated above works similarly, and we obtain a similar conclusion. In this sense, we omit the proof of the next result, which also allows to claim that computing the DOID number of graphs is NP-hard.

\begin{theorem}
The DOID problem is NP-complete.
\end{theorem}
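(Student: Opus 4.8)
The plan is to mimic the reduction used for the 2OID problem, adapting both the gadget and the counting argument to the double domination requirement. First I would recall that double domination demands $|N[v]\cap D|\ge 2$ for \emph{every} vertex $v$, including those in $D$ itself, whereas 2-domination only constrains vertices outside the set. This is exactly why the construction must change: in each copy $K_{2,3}^i$ we add an edge between the two vertices of $A_i$, so that a vertex placed in $D$ can acquire its second closed-neighborhood hit from its partner in $A_i$. I would again set the target to $\gamma_{d}^{oi}(G')=3n-\alpha(G)$ and establish both inequalities.

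For the upper bound, I would take an $\alpha(G)$-set $S$ and form $S'=(V(G)\setminus S)\cup\bigl(\bigcup_{i=1}^n A_i\bigr)$ exactly as before, then verify it is now a DOID set. Each $w_i\in B_i$ still has two neighbors in the two vertices of $A_i$, hence $|N[w_i]\cap S'|\ge 2$; each vertex of $A_i$ lies in $S'$ and, thanks to the new edge inside $A_i$, has its partner in $A_i\subseteq S'$ within its closed neighborhood, giving the required count of $2$; and each $v_i\in S$ (the vertices outside $S'$ in $V(G)$) has its $A_i$-neighbor $v_i'$ together with a neighbor in $V(G)\setminus S$, so $|N[v_i]\cap S'|\ge 2$. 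Since $V(G')\setminus S'$ remains independent (it is $S$ together with the $B_i$ vertices, which form no edges among themselves nor to $S$), we obtain $\gamma_{d}^{oi}(G')\le 3n-\alpha(G)$.

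For the lower bound I would argue, for a $\gamma_{d}^{oi}(G')$-set $D$ with restrictions $D_G=D\cap V(G)$ and $D_i=D\cap V(K_{2,3}^i)$, that $|D_i|\ge 2$ for every $i$: the three vertices of $B_i$ each need two closed-neighborhood hits, forcing at least two vertices of $A_i$ to be selected, or else several $B_i$ vertices into $D$; a short case check on whether the non-$G$-adjacent vertex of $A_i$ lies in $D$ yields $|D_i|\ge 2$ in all cases just as in the 2OID proof, with the internal $A_i$ edge ensuring the double condition is satisfiable at that minimum. Combining $\sum_i|D_i|\ge 2n$ with $|D_G|\ge n-\alpha(G)$ (from independence of $V(G)\setminus D_G$) gives $\gamma_{d}^{oi}(G')\ge 3n-\alpha(G)$, and the reduction $j=3n-k$ completes the argument.

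The main obstacle I expect is confirming the double-domination count for vertices \emph{inside} $A_i$ and for the base-graph vertices $v_i$, since these are the places where double domination genuinely differs from 2-domination. In particular I must check carefully that the added $A_i$-edge supplies the needed second hit without disturbing outer-independence, and that a minimum solution cannot economize below $|D_i|=2$ by exploiting the edge; this local verification is the crux, after which the global counting is identical to the 2OID case.
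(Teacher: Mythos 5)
Your proposal is correct and follows exactly the route the paper intends: the paper itself omits the details, stating only that the 2OID reduction works after adding an edge between the two vertices of each $A_i$, which is precisely your construction, your target value $3n-\alpha(G)$, and your counting argument. Your verification of the closed-neighborhood condition for the $A_i$ vertices and the base-graph vertices fills in the details the paper leaves to the reader, and it is sound.
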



\section{Bounds}\label{sect-bound}

Since the problems of computing all three parameters studied in this work are NP-hard, it is desirable to bound their values with respect to several different invariants of the graph. Accordingly, in this section, we bound $\gamma_{2}^{oi}(G),\gamma_{t}^{oi}(G)$ and $\gamma_{d}^{oi}(G)$ from below and above.

\begin{theorem}\label{T13}
Let $G$ be a graph of order $n$ and $\delta(G)\geq 2$. If $G$ is different from a complete graph and an odd cycle, then $\gamma_{2}^{oi}(G)\leq(\Delta(G)-1)n/\Delta(G)$. Moreover, this bound is sharp.
\end{theorem}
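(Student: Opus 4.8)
The plan is to reduce the evaluation of $\gamma_{2}^{oi}(G)$ to the independence number $\alpha(G)$ and then invoke Brooks' theorem, whose two exceptional cases are precisely the complete graphs and odd cycles excluded in the hypothesis.

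First I would record the following simple but decisive observation: when $\delta(G)\ge 2$, the complement of \emph{any} independent set is a $2$OID set. Indeed, let $I\subseteq V(G)$ be independent and put $S=V(G)\setminus I$. By construction $V(G)\setminus S=I$ is independent, so the outer-independence condition holds. Moreover, each $v\in I$ has $|N_G(v)|\ge\delta(G)\ge 2$, and since $I$ is independent every neighbour of $v$ lies in $S$; hence $v$ has at least two neighbours in $S$. Thus $S$ is a $2$OID set, and consequently $\gamma_{2}^{oi}(G)\le |S|=n-|I|$. Taking $I$ to be a maximum independent set gives $\gamma_{2}^{oi}(G)\le n-\alpha(G)$. (In fact equality holds here, because every $2$OID set has independent complement and so its complement has size at most $\alpha(G)$; but only the upper bound is needed below.)

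The second step is to bound $\alpha(G)$ from below through a proper colouring. Since $G$ is neither a complete graph nor an odd cycle, Brooks' theorem yields $\chi(G)\le\Delta(G)$. Fix a proper colouring of $G$ using at most $\Delta(G)$ colours; its colour classes partition $V(G)$ into at most $\Delta(G)$ independent sets, so by the pigeonhole principle some class $I$ satisfies $|I|\ge n/\Delta(G)$. Combining this with the first step gives
$$\gamma_{2}^{oi}(G)\le n-|I|\le n-\frac{n}{\Delta(G)}=\frac{(\Delta(G)-1)n}{\Delta(G)},$$
as desired.

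Finally, for sharpness I would exhibit a family attaining the bound. Even cycles are a clean choice: for $C_n$ with $n$ even we have $\Delta(C_n)=2$ and $\alpha(C_n)=n/2$, so $(\Delta-1)n/\Delta=n/2$, which matches the value $\gamma_{2}^{oi}(C_n)=\lceil n/2\rceil=n/2$ recorded in the introduction. I expect no genuine computational obstacle in this argument; the only delicate point is the invocation of Brooks' theorem, which is stated for connected graphs, so one should read the hypothesis in the connected setting (a disconnected $G$ could have a complete or odd-cycle component forcing $\chi(G)=\Delta(G)+1$). The conceptual core is simply recognising that the two forbidden configurations are exactly the Brooks exceptions, after which a one-line pigeonhole count finishes the proof.
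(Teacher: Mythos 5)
Your proof is correct and follows essentially the same route as the paper: both establish $\gamma_{2}^{oi}(G)\le n-\alpha(G)$ (in fact equality) by observing that the complement of a maximum independent set is a $2$OID set when $\delta(G)\ge 2$, then invoke Brooks' theorem to get $\alpha(G)\ge n/\Delta(G)$ (the paper phrases this via $\chi(G)\ge n/\alpha(G)$, which is the same pigeonhole count), and both cite even cycles for sharpness, though the paper additionally constructs a larger extremal family. Your closing remark about connectivity is a genuine subtlety the paper glosses over --- for instance, the disjoint union of two copies of $K_{4}$ satisfies the stated hypotheses but violates the bound --- so reading the theorem in the connected setting, as you do, is the right interpretation.
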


\begin{proof}
We have $n-\gamma_{2}^{oi}(G)\leq \alpha(G)$, by the definition. Let $I$ be an $\alpha(G)$-set. Since $\delta(G)\geq2$, every vertex in $I$ has at least two neighbors in $V(G)\setminus I$. Therefore, $V(G)\setminus I$ is a $2$OID set in $G$. This implies that $\gamma_{2}^{oi}(G)\leq n-\alpha(G)$ and hence
\begin{equation}\label{E8}
\gamma_{2}^{oi}(G)=n-\alpha(G).
\end{equation}

It is well-known that $\chi(G)\geq n/\alpha(G)$ (see \cite{w}) where $\chi(G)$ represents the chromatic number of $G$. Therefore, $\alpha(G)\geq n/\Delta(G)$ by Brook's Theorem \cite{b} (which states that for any connected graph $G$ other than a complete graph or an odd cycle, $\chi(G)\leq \Delta(G)$). Now, we derive the upper bound by (\ref{E8}).

The bound is clearly sharp for even cycles. In what follows we present a more general family of graphs attaining the upper bound. We construct a graph $H$ beginning with a cycle $C_{2k}$ on the vertices $v_{1},v_{2},\cdots,v_{2k}$. Next, for every $1\leq i\leq k$, add a complete graph $K_{p}$ ($p\geq 0$) and join the vertices $v_{2i-1}$ and $v_{2i}$ to all vertices of $K_p$. It is easy to see that $\alpha(H)=k$. Moreover, we have $\gamma_{2}^{oi}(H)=k+kp=(p+1)(2k+kp)/(p+2)=(\Delta-1)n/\Delta$.
\end{proof}

As can be readily seen, the difference between any pair of the three parameters studied in this work can be arbitrary large. However, they have a ``better'' behavior while dealing with the family of claw-free graphs, as we next show.

\begin{theorem}\label{T1}
If $G$ is a claw-free graph of order $n$, then the following statements hold.
\begin{itemize}
  \item[{\rm (i)}] $\gamma_{t}^{oi}(G),\gamma_{2}^{oi}(G),\gamma_{d}^{oi}(G)\geq \delta(G) n/(\delta(G)+2)$.
  \item[{\rm (ii)}] If $\delta(G)\geq3$, then $\gamma_{t}^{oi}(G)=\gamma_{2}^{oi}(G)=\gamma_{d}^{oi}(G)$.
\end{itemize}
\end{theorem}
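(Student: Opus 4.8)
The plan is to prove both parts through a single structural observation about claw-free graphs: if $S\subseteq V(G)$ is any set whose complement $\overline{S}=V(G)\setminus S$ is independent, then every vertex $u\in S$ has \emph{at most two} neighbors in $\overline{S}$. Indeed, the neighbors of $u$ that lie in $\overline{S}$ form an independent set (since $\overline{S}$ is independent), so three such neighbors together with $u$ would induce a $K_{1,3}$, contradicting claw-freeness. This cap on the cut from the $S$-side, paired with the minimum-degree contribution from the $\overline{S}$-side, drives the whole argument.

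For part (i), I would fix $S$ to be a minimum set of the relevant type (TOID, $2$OID, or DOID); in each case $\overline{S}$ is independent, so the observation above applies uniformly. Let $e=|[S,\overline{S}]|$ count the edges across the cut. Because $\overline{S}$ is independent, every vertex $v\in\overline{S}$ sends all of its (at least $\delta(G)$) incident edges into $S$, giving $e\geq\delta(G)\,|\overline{S}|=\delta(G)(n-|S|)$. On the other hand, the claw-free observation yields $e\leq 2|S|$. Combining these, $\delta(G)(n-|S|)\leq 2|S|$, whence $\delta(G)\,n\leq(\delta(G)+2)|S|$ and therefore $|S|\geq\delta(G)\,n/(\delta(G)+2)$. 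Since this holds for each of the three minimum sets, part (i) follows.

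For part (ii), I would invoke that $\gamma_t^{oi}(G)=\gamma_d^{oi}(G)$ already holds whenever $\delta(G)\geq 2$ (noted in the preliminaries) and that $\gamma_2^{oi}(G)\leq\gamma_d^{oi}(G)$ in general. It therefore suffices to show $\gamma_d^{oi}(G)\leq\gamma_2^{oi}(G)$, which I would establish by proving that any minimum $2$OID set $S$ is in fact a DOID set. For $v\in\overline{S}$ the DOID requirement $|N[v]\cap S|\geq 2$ coincides with the $2$OID requirement, since $v\notin S$ forces $N[v]\cap S=N(v)\cap S$. For $v\in S$ one needs $v$ to have at least one neighbor in $S$. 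If some $v\in S$ had all its neighbors in $\overline{S}$, then, as $\deg(v)\geq\delta(G)\geq 3$ and $\overline{S}$ is independent, three of these neighbors together with $v$ would induce a claw, a contradiction. Hence $G[S]$ has no isolated vertex, so $S$ is a DOID set and $\gamma_d^{oi}(G)\leq|S|=\gamma_2^{oi}(G)$, forcing equality of all three parameters.

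The essential idea in both parts is the same claw-free observation, so I anticipate little genuine difficulty once it is isolated. The one point requiring care is the uniform treatment in part (i): I must confirm that the lower-bound argument uses only the outer-independence of $\overline{S}$ together with the minimum-degree hypothesis, and not any domination property specific to a single parameter, so that the identical bound applies to all three minimum sets simultaneously.
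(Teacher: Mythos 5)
Your proposal is correct and follows essentially the same route as the paper: the identical cut-counting argument $\delta(G)(n-|S|)\leq|[S,V(G)\setminus S]|\leq 2|S|$ based on the claw-free cap of two outside-neighbors per vertex of $S$, and for (ii) the same argument that a minimum $2$OID set induces no isolated vertices and is hence a DOID set, combined with $\gamma_t^{oi}(G)=\gamma_d^{oi}(G)$ for $\delta(G)\geq 2$. Your only (harmless) deviations are applying the cut argument uniformly to all three minimum sets rather than deducing the DOID case from $\gamma_d^{oi}(G)\geq\gamma_2^{oi}(G)$, and stating "at least $\delta(G)$" neighbors where the paper loosely writes "$\delta(G)$ neighbors."
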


\begin{proof}
(i) Let $S$ be a $\gamma_{2}^{oi}(G)$-set. Since $G$ is claw-free and $V(G)\setminus S$ is independent, every vertex in $S$ has at most two neighbors in $V(G)\setminus S$. Also, every vertex in $V(G)\setminus S$ has $\delta(G)$ neighbors in $S$. Thus,
$$\delta(G)(n-|S|)\leq|[S,V(G)\setminus S]|\leq2|S|.$$
Therefore, $\gamma_{2}^{oi}(G)\geq \delta(G) n/(\delta(G)+2)$.
\noindent Since $\gamma_{d}^{oi}(G)\geq \gamma_{2}^{oi}(G)$ for every graph $G$, this bound is valid also for $\gamma_{d}^{oi}(G)$. The proof for $\gamma_{t}^{oi}(G)$ works among the same lines as for $\gamma_{2}^{oi}(G)$ and we omit the details.

(ii) Clearly, $\gamma_{2}^{oi}(G)\leq\gamma_{d}^{oi}(G)$. Now let $S$ be a $\gamma_{2}^{oi}(G)$-set. Let $v$ be a vertex in $S$. By using the facts that $G$ is claw-free, $V(G)\setminus S$ is independent and $deg(v)\geq3$, we deduce that $v$ has at least one neighbor in $S$. Therefore, $G[S]$ has no isolated vertices. So, $S$ is a DOID set in $G$ as well. We now have $\gamma_{d}^{oi}(G)\leq|S|=\gamma_{2}^{oi}(G)$ implying $\gamma_{2}^{oi}(G)=\gamma_{d}^{oi}(G)$. Moreover, the condition $\delta(G)\geq3$ guarantees that $\gamma_{t}^{oi}(G)=\gamma_{d}^{oi}(G)$ as mentioned in the introduction.
\end{proof}

In the case of cubic claw-free graphs, we have the following result. The lower and upper bounds are immediate results of Theorem \ref{T13} and Theorem \ref{T1}.

\begin{corollary}\label{C2}
For any claw-free cubic graph $G$ of order $n$ different from $K_{4}$,
$$\frac{3n}{5}\leq \gamma_{t}^{oi}(G)=\gamma_{2}^{oi}(G)=\gamma_{d}^{oi}(G)\leq\frac{2n}{3}.$$
\end{corollary}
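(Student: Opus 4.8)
The plan is to simply assemble the two bounds that have already been established, since the corollary explicitly advertises itself as an ``immediate'' consequence of Theorem~\ref{T13} and Theorem~\ref{T1}. The key observation is that a claw-free cubic graph $G$ of order $n$ has $\delta(G)=\Delta(G)=3$, so both theorems specialize cleanly with these parameter values.

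\begin{proof}
Since $G$ is cubic, we have $\delta(G)=\Delta(G)=3$. Because $G$ is claw-free with $\delta(G)=3\geq 3$, part~(ii) of Theorem~\ref{T1} gives
$$\gamma_{t}^{oi}(G)=\gamma_{2}^{oi}(G)=\gamma_{d}^{oi}(G),$$
so it remains only to bound this common value from below and above. For the lower bound, part~(i) of Theorem~\ref{T1} with $\delta(G)=3$ yields
$$\gamma_{2}^{oi}(G)\geq \frac{\delta(G)\,n}{\delta(G)+2}=\frac{3n}{5}.$$
For the upper bound, note that $G$ satisfies $\delta(G)\geq 2$, and since $G\neq K_{4}$ and $G$ is cubic it is neither a complete graph nor an odd cycle. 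Hence Theorem~\ref{T13} applies with $\Delta(G)=3$, giving
$$\gamma_{2}^{oi}(G)\leq \frac{(\Delta(G)-1)\,n}{\Delta(G)}=\frac{2n}{3}.$$
Combining these with the equalities above completes the proof.
\end{proof}

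The only point requiring the slightest care, rather than a genuine obstacle, is verifying that the hypotheses of Theorem~\ref{T13} are met: one must confirm that a claw-free cubic graph distinct from $K_{4}$ is neither complete nor an odd cycle. The exclusion of complete graphs follows because the only cubic complete graph is $K_{4}$, which is explicitly removed; the exclusion of odd cycles follows because odd cycles are $2$-regular, not cubic. With these two routine checks in place, both theorems apply verbatim and the corollary falls out immediately, so I do not anticipate any substantive difficulty.
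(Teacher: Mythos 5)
Your proposal is correct and follows exactly the paper's own route: the paper likewise obtains the equalities and the lower bound $3n/5$ from Theorem~\ref{T1} and the upper bound $2n/3$ from Theorem~\ref{T13}, treating both as immediate. Your explicit check that a cubic graph other than $K_{4}$ is neither complete nor an odd cycle is a detail the paper leaves implicit, but it does not change the argument.
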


To see that the upper bound of Corollary \ref{C2} is sharp, we consider the claw-free cubic graph $H_{1}$ depicted in Figure $2$. It is easily seen that $\{x_{1},x_{3},y_{2},y_{3},y_{4},z_{2},z_{3},z_{4}\}$ is a $2$OID set of cardinality $\gamma_{2}^{oi}(H)=2n/3$.
Moreover, to see the tightness of the lower bound of Corollary \ref{C2}, we consider the claw-free cubic graph $H_{2}$ depicted in Figure \ref{fig-reduction}. It is easily seen that $\{u_{1},u_{2},u_{3},u_{4},u_{5},u_{6}\}$ is a $2$OID set of the cardinality $\gamma_{2}^{oi}(H)=3n/5$.

\begin{figure}[ht]
 \centering
\begin{tikzpicture}[scale=.35, transform shape]
\node [draw, shape=circle] (z4) at  (-.3,7.5) {};
\node [draw, shape=circle] (z2) at  (-5.3,7.5) {};
\node [draw, shape=circle] (z1) at  (-2.8,6) {};
\node [draw, shape=circle] (z3) at  (-2.8,9) {};

\node [draw, shape=circle] (y4) at  (-8.7,7.5) {};
\node [draw, shape=circle] (y2) at  (-13.7,7.5) {};
\node [draw, shape=circle] (y1) at  (-11.2,6) {};
\node [draw, shape=circle] (y3) at  (-11.2,9) {};

\node [draw, shape=circle] (x4) at  (-4.5,13.5) {};
\node [draw, shape=circle] (x2) at  (-9.5,13.5) {};
\node [draw, shape=circle] (x1) at  (-7,12) {};
\node [draw, shape=circle] (x3) at  (-7,15) {};

\draw(z1)--(z3);
\draw(z1)--(z2)--(z3)--(z4)--(z1);
\draw(y1)--(y3);
\draw(y1)--(y2)--(y3)--(y4)--(y1);
\draw(x1)--(x3);
\draw(x1)--(x2)--(x3)--(x4)--(x1);
\draw(x4)--(z4);
\draw(z2)--(y4);
\draw(x2)--(y2);

\node [scale=2.4] at (-6.8,11.2) {\large $x_{3}$};
\node [scale=2.4] at (-6.8,15.6) {\large $x_{1}$};
\node [scale=2.4] at (-10.5,13.6) {\large $x_{2}$};
\node [scale=2.4] at (-3.5,13.6) {\large $x_{4}$};

\node [scale=2.4] at (-10.6,9.5) {\large $y_{1}$};
\node [scale=2.4] at (-11.4,5.4) {\large $y_{3}$};
\node [scale=2.4] at (-14.3,7.1) {\large $y_{2}$};
\node [scale=2.4] at (-8.2,6.8) {\large $y_{4}$};

\node [scale=2.4] at (-3.4,9.5) {\large $z_{1}$};
\node [scale=2.4] at (-3,5.4) {\large $z_{3}$};
\node [scale=2.4] at (-5.3,6.8) {\large $z_{2}$};
\node [scale=2.4] at (.3,7) {\large $z_{4}$};


\node [draw, shape=circle] (u1) at  (16,15) {};
\node [draw, shape=circle] (u2) at  (13.1,13.9) {};
\node [draw, shape=circle] (v3) at  (11.5,12.2) {};
\node [draw, shape=circle] (v2) at  (11.6,9.2) {};
\node [draw, shape=circle] (v4) at  (18.7,13.8) {};
\node [draw, shape=circle] (u3) at  (20.3,11.6) {};
\node [draw, shape=circle] (u4) at  (20.2,8.7) {};
\node [draw, shape=circle] (v1) at  (18.4,6.7) {};
\node [draw, shape=circle] (u6) at  (15.7,6.3) {};
\node [draw, shape=circle] (u5) at  (13,7.1) {};

\node [scale=2.4] at (15.9,15.7) {\large $u_{1}$};
\node [scale=2.4] at (12.6,14.5) {\large $u_{2}$};
\node [scale=2.4] at (10.8,12.4) {\large $v_{3}$};
\node [scale=2.4] at (10.7,9.5) {\large $v_{2}$};
\node [scale=2.4] at (12.2,7) {\large $u_{5}$};
\node [scale=2.4] at (15.6,5.5) {\large $u_{6}$};
\node [scale=2.4] at (19,6.1) {\large $v_{1}$};
\node [scale=2.4] at (21,8.5) {\large $u_{4}$};
\node [scale=2.4] at (21.3,11.5) {\large $u_{3}$};
\node [scale=2.4] at (19.4,14.2) {\large $v_{4}$};

\draw(v1)--(u1)--(u2)--(v1)--(u4)--(v4)--(u6)--(v3)--(u5)--(v2)--(u2);
\draw(u1)--(v2);
\draw(v3)--(u3)--(v4);
\draw(u6)--(u5);
\draw(u3)--(u4);

\end{tikzpicture}
  \caption{The claw-free cubic graphs $H_{1}$ and $H_{2}$, respectively.}\label{fig-reduction}
\end{figure}
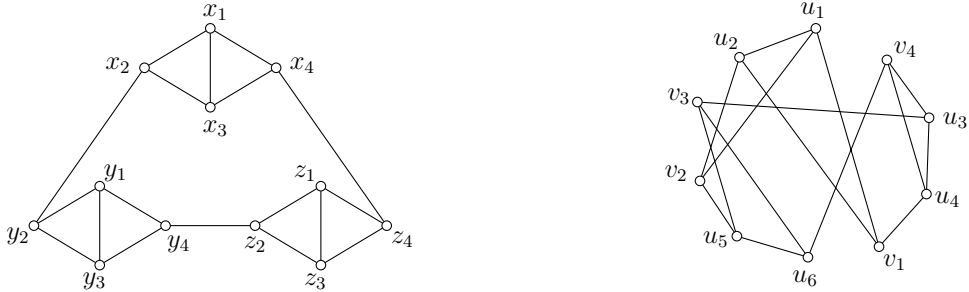

We now turn our attention to triangle-free graphs for which we required the following known result.

\begin{lemma}\emph{(\cite{f})}\label{L3}
If $G$ is a triangle-free graph of order $n$, then $\alpha(G)\geq2n/(3+\Delta(G))$.
\end{lemma}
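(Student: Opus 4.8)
The plan is to prove the bound by a double-counting argument on the edges joining a maximum independent set to its complement, where triangle-freeness enters only through the elementary fact that the open neighborhood of every vertex is an independent set. The whole argument reduces to comparing two estimates for the size of this edge cut.

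First I would fix a maximum independent set $I$, so that $|I|=\alpha(G)$, and set $R=V(G)\setminus I$ with $|R|=n-\alpha(G)$. Since $I$ is independent, every neighbor of a vertex of $I$ lies in $R$; counting the edges of $[I,R]$ by their endpoints in $I$ therefore yields the upper estimate $|[I,R]|\le\Delta(G)\,|I|$, because each vertex of $I$ has at most $\Delta(G)$ neighbors, all of them in $R$. The goal is then to produce a lower estimate for $|[I,R]|$ strong enough to force $|I|\ge 2n/(3+\Delta(G))$.

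For the lower estimate I would split $R$ by how many neighbors each vertex has in $I$. By maximality of $I$ every vertex of $R$ has at least one neighbor in $I$; let $R_1$ be the set of vertices of $R$ with \emph{exactly} one neighbor in $I$. Counting $[I,R]$ by endpoints in $R$ gives $|[I,R]|\ge 2|R|-|R_1|$, since each vertex of $R_1$ contributes one edge and every other vertex of $R$ contributes at least two. Combining the two estimates produces $2|R|-|R_1|\le\Delta(G)\,|I|$, so the whole proof hinges on bounding $|R_1|$ from above.

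The main obstacle, and the only point where triangle-freeness is used, is the inequality $|R_1|\le|I|$. Here I would consider the map sending each $v\in R_1$ to its unique neighbor in $I$ and show it is injective. If two vertices $v,v'\in R_1$ shared the same neighbor $u\in I$, then $v,v'\in N_G(u)$; since $G$ is triangle-free, $N_G(u)$ is independent, so $v$ and $v'$ are non-adjacent, and neither is adjacent to any vertex of $I\setminus\{u\}$. Hence $(I\setminus\{u\})\cup\{v,v'\}$ would be an independent set of size $|I|+1$, contradicting the maximality of $I$. Thus the map is injective, giving $|R_1|\le|I|$. Substituting this into $2|R|-|R_1|\le\Delta(G)\,|I|$ yields $2(n-|I|)-|I|\le\Delta(G)\,|I|$, that is $2n\le(3+\Delta(G))\,|I|$, which is exactly the claimed inequality $\alpha(G)\ge 2n/(3+\Delta(G))$.
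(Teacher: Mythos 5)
Your proof is correct. Note, however, that the paper does not prove this lemma at all: it is quoted verbatim from Fajtlowicz's 1978 paper \cite{f} and used as a black box (via equation (\ref{E8})) to derive Theorem \ref{T9}, so there is no in-paper argument to compare against. Your argument stands on its own and is sound: the upper count $|[I,R]|\le\Delta(G)|I|$ is valid because $I$ is independent, so all neighbors of its vertices lie in $R$; the lower count $|[I,R]|\ge 2|R|-|R_1|$ is valid because maximality of $I$ forces every vertex of $R$ to have at least one neighbor in $I$; and the key injectivity step is exactly where triangle-freeness must enter, since the swap $(I\setminus\{u\})\cup\{v,v'\}$ is independent precisely because $v,v'\in N_G(u)$ cannot be adjacent and, being in $R_1$, have no neighbors in $I\setminus\{u\}$. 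The chain $\Delta(G)|I|\ge 2|R|-|R_1|\ge 2(n-|I|)-|I|$ then gives $(\Delta(G)+3)\,\alpha(G)\ge 2n$ as claimed. This is in the same spirit as Fajtlowicz's original result (which is the more general bound $\alpha(G)\ge 2n/(\Delta(G)+\omega(G)+1)$, specialized here to $\omega(G)\le 2$), but your edge-counting proof is elementary and self-contained, which is a reasonable trade-off given that the paper needed only the triangle-free case.
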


The following result is an immediate consequence of the equation (\ref{E8}) and Lemma \ref{L3}.

\begin{theorem}\label{T9}
If $G$ is a triangle-free graph of order $n$ with $\delta(G)\geq 2$, then $\gamma_{2}^{oi}(G)\leq\frac{\Delta(G)+1}{\Delta(G)+3}n$.
\end{theorem}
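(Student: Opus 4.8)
The plan is to combine the identity established in (\ref{E8}) with the independence-number estimate of Lemma \ref{L3}; the two ingredients fit together essentially by a single substitution. First I would observe that the hypothesis $\delta(G)\geq 2$ is precisely the condition under which the relation in (\ref{E8}) was obtained inside the proof of Theorem \ref{T13}. In fact I only need the direction $\gamma_{2}^{oi}(G)\leq n-\alpha(G)$: taking $I$ to be an $\alpha(G)$-set, every vertex of $I$ has at least two neighbours in $V(G)\setminus I$ (because $\delta(G)\geq 2$ and $I$ is independent), so $V(G)\setminus I$ is a $2$OID set, giving $\gamma_{2}^{oi}(G)\leq |V(G)\setminus I|=n-\alpha(G)$. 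This is the only place where the minimum-degree hypothesis enters.

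Next, since $G$ is triangle-free, Lemma \ref{L3} supplies $\alpha(G)\geq 2n/(3+\Delta(G))$. Substituting this lower bound for $\alpha(G)$ into $\gamma_{2}^{oi}(G)\leq n-\alpha(G)$ yields
$$\gamma_{2}^{oi}(G)\leq n-\frac{2n}{\Delta(G)+3}=\frac{\Delta(G)+3-2}{\Delta(G)+3}\,n=\frac{\Delta(G)+1}{\Delta(G)+3}\,n,$$
which is exactly the claimed bound.

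I do not anticipate any genuine obstacle here, since the statement is an immediate corollary once both tools are available. The only point deserving a moment of care is verifying that reusing (\ref{E8}) is legitimate in this setting: I invoke solely the inequality $\gamma_{2}^{oi}(G)\leq n-\alpha(G)$, whose validity rests exactly on $\delta(G)\geq 2$, while the companion inequality $n-\gamma_{2}^{oi}(G)\leq\alpha(G)$ (which needs no degree condition) plays no role in the bound. Triangle-freeness is used only through Lemma \ref{L3}, so no further structural argument about $G$ is required.
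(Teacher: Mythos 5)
Your proposal is correct and follows exactly the paper's route: the paper also derives Theorem \ref{T9} as an immediate consequence of equation (\ref{E8}) (whose relevant direction $\gamma_{2}^{oi}(G)\leq n-\alpha(G)$ rests on $\delta(G)\geq 2$) combined with Lemma \ref{L3}. Your extra remark isolating which half of (\ref{E8}) is actually needed is a nice clarification but does not change the argument.
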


We remark that, Fajtlowicz \cite{f1} studied some classes of triangle-free graphs achieving the equality in the lower bound in Lemma \ref{L3}. This shows that the upper bound in Theorem \ref{T9} is sharp.

Let $L(G)$ and $S(G)$ be the set of leaves and support vertices in the graph $G$, respectively. We define $\delta^{*}(G)$ as the minimum degree taken over all vertices from $V(G)\setminus(L(G)\cup S(G))$, if $L(G)\cup S(G)\subset V(G)$, and 2 otherwise (that is, if $L(G)\cup S(G)=V(G)$). Clearly, $\delta^{*}(G)\geq 2$.

We construct a family $\Omega$ of graphs $G$ as follows. Choose the integers $a,b\geq0$, $r\geq2$ and $p\geq0$ such that $a+b\geq1$ and $2a(r-1)\leq pr$. We begin with the pendant edges $v_{1}v_{1}',\dots,v_{a}v_{a}', v_{a+1}v_{a+1}',\dots,v_{a+b}v_{a+b}'$. Add $p$ isolated vertices $u_{1},\dots,u_{p}$. Join every vertex $u_{i}$ to exactly $r$ vertices in $A=\{v_{1},v_{1}',\dots,v_{a},v_{a}',v_{a+1},\dots,v_{a+b}\}$ such that the degree of all vertices in $B=\{v_{1},v_{1}',\dots,v_{a},v_{a}'\}$ is at least $r$ (this is possible, because $|B|=2a$ and $2a(r-1)\leq pr$). Finally, to obtain the graph $G\in\Omega$, for any $v_{a+i}$ with $1\leq i\leq b$, we add $k_{a+i}\geq 0$ new vertices and join all of them to $v_{a+i}$ by edges. Figure \ref{fig-omega} depicts a fairly representative member of $\Omega$.

\begin{figure}[ht]
 \centering
\begin{tikzpicture}[scale=.35, transform shape]
\node [draw, shape=circle] (v1) at  (0,14) {};
\node [draw, shape=circle] (v1') at  (-3,14) {};
\node [draw, shape=circle] (v2) at  (-7,14) {};
\node [draw, shape=circle] (v2') at  (-10,14) {};
\node [draw, shape=circle] (v3) at  (-14,14) {};
\node [draw, shape=circle] (v3') at  (-17,14) {};
\node [draw, shape=circle] (v4) at  (-21,14) {};
\node [draw, shape=circle] (v4') at  (-24,14) {};

\node [draw, shape=circle] (u1) at  (-3.5,7.5) {};
\node [draw, shape=circle] (u2) at  (-8.5,7.5) {};
\node [draw, shape=circle] (u3) at  (-13.5,7.5) {};
\node [draw, shape=circle] (u4) at  (-18.5,7.5) {};

\node [draw, shape=circle] (w1) at  (-22.5,11) {};
\node [draw, shape=circle] (w2) at  (-24,11) {};
\node [draw, shape=circle] (w3) at  (-25.5,11) {};

\draw (v1)--(v1');
\draw (v2)--(v2');
\draw (v3)--(v3');
\draw (v4)--(v4');
\draw (u1)--(v1)--(u2);
\draw (u1)--(v2)--(u4);
\draw (u1)--(v2')--(u4);
\draw (u2)--(v1')--(u3);
\draw (u2)--(v4);
\draw (v3)--(u3)--(v4);
\draw (u4)--(v1');
\draw (w1)--(v4)--(w2);
\draw (v4)--(w3);

\node [scale=2.4] at (.3,14.6) {\large $v_{1}$};
\node [scale=2.4] at (-2.7,14.7) {\large $v'_{1}$};
\node [scale=2.4] at (-6.7,14.6) {\large $v_{2}$};
\node [scale=2.4] at (-9.7,14.7) {\large $v'_{2}$};
\node [scale=2.4] at (-13.7,14.6) {\large $v_{3}$};
\node [scale=2.4] at (-16.7,14.7) {\large $v'_{3}$};
\node [scale=2.4] at (-20.7,14.6) {\large $v_{4}$};
\node [scale=2.4] at (-23.7,14.7) {\large $v'_{4}$};
\node [scale=2.4] at (-3,6.8) {\large $u_{1}$};
\node [scale=2.4] at (-8,6.8) {\large $u_{2}$};
\node [scale=2.4] at (-13,6.8) {\large $u_{3}$};
\node [scale=2.4] at (-18,6.8) {\large $u_{4}$};
\node [scale=2.4] at (-21.8,10.2) {\large $w_{1}$};
\node [scale=2.4] at (-23.8,10.2) {\large $w_{2}$};
\node [scale=2.4] at (-25.8,10.2) {\large $w_{3}$};

\end{tikzpicture}\vspace{3mm}
  \caption{A member $G$ of $\Omega$ with $(a,b,p,r,k_{3},k_{4})=(2,2,4,3,0,3)$.}\label{fig-omega}
\end{figure}
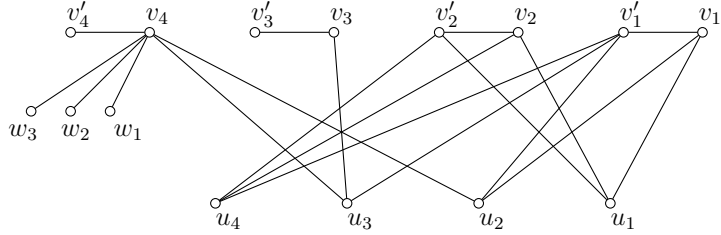

In addition, we recall that a {\em galaxy} is a forest (a collection of trees) in which each component is a star.

\begin{theorem}\label{T2}
If $G$ is a graph of order $n$, size $m$, with $\ell$ leaves and $s$ support vertices, then
$$\gamma_{d}^{oi}(G)\geq\frac{2\delta^{*}(G)n-2m+\ell-s}{2\delta^{*}(G)-1}.$$
Furthermore, the equality holds if and only if $G\in \Omega$.
\end{theorem}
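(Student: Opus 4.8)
The plan is to fix an arbitrary DOID set $D$, write $\overline{D}=V(G)\setminus D$, and charge the bound to a double edge-count that exploits the independence of $\overline{D}$. First I would record the forced memberships. If $w$ is a leaf with support $u$, then $N[w]=\{w,u\}$, so $|N[w]\cap D|\ge 2$ forces both $w\in D$ and $u\in D$; hence $L(G)\cup S(G)\subseteq D$, every vertex of $\overline{D}$ lies in $V(G)\setminus(L(G)\cup S(G))$ and therefore has degree at least $\delta^{*}(G)$. Moreover $v\in N[v]$ shows each $v\in D$ has a neighbour inside $D$, i.e. $G[D]$ has no isolated vertex. Writing $m_{1}=|E(G[D])|$ and $m_{2}=|[D,\overline{D}]|$, independence of $\overline{D}$ gives $m=m_{1}+m_{2}$.

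Then I would establish the two inequalities that drive the bound. Counting edges leaving $\overline{D}$ gives $m_{2}=\sum_{v\in\overline{D}}\deg_{G}(v)\ge\delta^{*}(G)(n-|D|)$. The decisive estimate is $2m_{1}\ge|D|+\ell-s$, which I would read off from $2m_{1}=\sum_{v\in D}\deg_{G[D]}(v)$: every vertex of $D\setminus S(G)$ contributes at least $1$ (since $G[D]$ has no isolated vertex), while each support $u$ contributes at least its number $\ell(u)$ of leaf-neighbours, and $\sum_{u\in S(G)}\ell(u)=\ell$ because each leaf has a unique support; thus $2m_{1}\ge(|D|-s)+\ell$. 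Combining $2m_{1}=2m-2m_{2}$ with the two estimates yields $|D|+\ell-s\le 2m_{1}\le 2m-2\delta^{*}(G)(n-|D|)$, and rearranging (using $2\delta^{*}(G)-1>0$, valid as $\delta^{*}(G)\ge 2$) gives exactly $\gamma_{d}^{oi}(G)\ge(2\delta^{*}(G)n-2m+\ell-s)/(2\delta^{*}(G)-1)$.

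For the equality analysis, the easy direction is to take a member of $\Omega$, set $D=A\cup\{\text{all added pendant vertices}\}$ and $\overline{D}=\{u_{1},\dots,u_{p}\}$, verify that $D$ is a DOID set and that $|D|$ equals the right-hand side, so minimality forces equality. The hard part will be the converse. Taking a minimum DOID set $D$ that realizes equality, every inequality above must be tight: each vertex of $\overline{D}$ has degree exactly $r:=\delta^{*}(G)$; each support is adjacent inside $D$ only to leaves; and each vertex of $D\setminus S(G)$ has exactly one neighbour in $D$. I would then argue that these tightness conditions force $G[D]$ to be a galaxy whose $K_{2}$-components form a perfect matching on $D':=D\setminus(L(G)\cup S(G))$ and whose larger stars are centred at the supports, while $\overline{D}$ attaches only to $A:=D'\cup S(G)$ and never to a leaf (a leaf's only neighbour is its support). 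Setting $p=|\overline{D}|$, $a=|D'|/2$, $b=|S(G)|$ and $r=\delta^{*}(G)$, the tightness $m_{2}=rp$ together with the fact that each of the $2a$ vertices of $D'$ needs at least $r-1$ neighbours in $\overline{D}$ delivers the arithmetic constraint $2a(r-1)\le pr$ defining $\Omega$, while the requirement that every vertex of $B=D'$ have degree at least $r$ follows from $\delta^{*}(G)=r$; this exhibits $G$ as a member of $\Omega$.

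I expect the genuine difficulty to be precisely this structural reconstruction in the converse — proving that $G[D]$ splits as matching-plus-stars (in particular that the unique $D$-neighbour of a $D'$-vertex is again in $D'$, ruling out supports and leaves) and that no $\overline{D}$-vertex meets a leaf — together with the bookkeeping for degenerate pieces, namely the $P_{2}=K_{2}$ components (where one vertex is simultaneously a leaf and a support) and the galaxy case $p=0$, which must be shown to arise in $\Omega$ via $a=0$ and $\delta^{*}(G)=2$.
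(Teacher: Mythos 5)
Your proposal is correct, and its lower-bound half takes a genuinely more direct route than the paper's. The paper first disposes of the case $L(G)\cup S(G)=V(G)$ separately, and otherwise performs a pruning reduction: it deletes all but one leaf at each support vertex to obtain $G'$, records $\gamma_{d}^{oi}(G)=\gamma_{d}^{oi}(G')+\ell-s$, $n'=n-\ell+s$, $m'=m-\ell+s$ and $\delta^{*}(G')=\delta^{*}(G)$, runs the crude count $2m'\geq 2\delta^{*}(G')(n'-|D'|)+|D'|$ in $G'$ (using only that $G'[D']$ has no isolated vertices), and then translates back. You instead absorb the $\ell-s$ correction directly into the degree sum over $G[D]$: each support contributes at least its $\ell(u)$ leaf-neighbours, every other vertex of $D$ contributes at least $1$, giving $2|E(G[D])|\geq |D|+\ell-s$; combined with $|[D,V(G)\setminus D]|\geq\delta^{*}(G)(n-|D|)$ this yields the bound in one step. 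This is arithmetically equivalent to what the pruning achieves, but your organization buys two things: no auxiliary graph, and no separate degenerate case --- indeed the paper's treatment of the case $L(G)\cup S(G)=V(G)$ justifies the bound via the identity $m=n-c$, which tacitly assumes every component is a tree (it fails, e.g., for a triangle with a pendant vertex attached to each corner, although the bound itself still holds there), whereas your inequality $2|E(G[D])|\geq|D|+\ell-s$ covers that case with no extra argument. What the paper's route buys in exchange is a slightly cleaner equality analysis, since in $G'$ every support has exactly one leaf and the extremal structure (perfect matching plus single-leaf stars) is immediate to read off. Your equality discussion is only a plan, but it is the right plan: the tightness conditions you isolate (every vertex of $V(G)\setminus D$ has degree exactly $\delta^{*}(G)$ with all neighbours in $D'\cup S(G)$; supports meet $D$ only in their leaves; every other $D$-vertex has a unique $D$-neighbour, which is then forced to lie in $D'=D\setminus(L(G)\cup S(G))$) are precisely what force the matching-plus-stars decomposition and the constraint $2a(r-1)\leq pr$, hence membership in $\Omega$; this mirrors, on $G$ itself, the reconstruction the paper carries out on $G'$, and you correctly flag the two degenerate situations ($K_{2}$ components and the galaxy case $p=0$, $a=0$, $\delta^{*}=2$) that need explicit mention.
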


\begin{proof}
If all vertices of $G$ are leaves or support vertices, then $\gamma_{d}^{oi}(G)=n$. Assume $G$ have $c$ components. In such case, the lower bound holds since $n=\ell+s$, $m=n-c$ and $\delta^{*}=2$. Moreover, an easy computation shows that we have the equality if and only if $s=c$, which is equivalent to the situation in which $G$ is a galaxy containing $c$ stars. In this sense, we observe that $G\in \Omega$. Conversely, let $G\in \Omega$. Since all vertices of $G$ are leaves or support vertices, it follows that $p=0$, and so, $G$ is a galaxy containing $c$ stars. Thus, $s=c$ and we have the desired quality.

In what follows, we may assume that there exists a vertex which is neither a leaf nor a support vertex. Let $S(G)=\{w_{1},\dots,w_{s}\}$, and let $\ell_{w_{i}}$ be the number of leaves adjacent to the support vertex $w_{i}$, for each $1\leq i\leq s$. Suppose that $G'$ is the graph obtained from $G$ by removing $\ell_{w_{i}}-1$ leaves from the support vertex $w_{i}$, for each $1\leq i\leq s$. Clearly $n'=n-\ell+s$ and $m'=m-\ell+s$, where $n'$ and $m'$ are the order and the size of $G'$, respectively.

Since all leaves and support vertices belong to each DOID set in a graph, it follows that
\begin{equation}\label{E1}
\gamma_{d}^{oi}(G')+\ell-s=\gamma_{d}^{oi}(G).
\end{equation}
Moreover,
\begin{equation}\label{E2}
\delta^{*}(G')=\delta^{*}(G).
\end{equation}

Now let $D'$ be a $\gamma_{d}^{oi}(G')$-set. Since $E(G[V(G')\setminus D'])=\emptyset$, we have $m'=|E(G[D'])|+|[D',V(G')\setminus D']|$. Also, since $L(G')\cup S(G')\subseteq D'$, every vertex in $V(G')\setminus D'$ has at least $\delta^{*}(G')$ neighbors in $D'$. Therefore, $|[D',V(G')\setminus D']|\geq \delta^{*}(G')(n'-|D'|)$. Moreover, $|E(G[D'])|\geq|D'|/2$ because $G'[D']$ has no isolated vertices. Therefore, $2m'\geq2\delta^{*}(G')(n'-|D'|)+|D'|$. This implies that
$$\gamma_{d}^{oi}(G')=|D'|\geq\frac{2n'\delta^{*}(G')-2m'}{2\delta^{*}(G')-1}.$$
By (\ref{E1}) and (\ref{E2}), we have
$$\gamma_{d}^{oi}(G)\geq\frac{2n'\delta^{*}(G)-2m'}{2\delta^{*}(G)-1}+\ell-s=\frac{2n\delta^{*}(G)-2m+\ell-s}{2\delta^{*}(G)-1},$$
as desired.

Suppose that $G\in \Omega$. Let $L_{a+i}$ be the set of leaves adjacent to $v_{a+i}$ different from $v_{a+i}'$. Note that
\begin{equation}\label{E7}
\begin{array}{lcl}
n=p+2a+2b+\sum_{i=1}^{b}|L_{a+i}|,\ m=pr+a+b+\sum_{i=1}^{b}|L_{a+i}|, \delta^{*}(G)=r\\
\ \ \ \ \ \ \ \ \ \ \ \ \ \ \ \ \ \ \ \ \ \ \ \ \ \ \ \ \ \ \ \mbox{and} \sum_{i=1}^{b}|L_{a+i}|=\ell-s.
\end{array}
\end{equation}

On the other hand, it follows from the structure of $G$ that
$$D=\{v_{1},v_{1}',\cdots,v_{a},v_{a}',v_{a+1},v_{a+1}',\cdots,v_{a+b},v_{a+b}'\}\cup \left(\bigcup_{i=1}^{b}L_{a+i}\right)$$
is a DOID set in $G$. Therefore, $$\gamma_{d}^{oi}(G)\leq|D|=n-p=\frac{2n\delta^{*}(G)-2m+\ell-s}{2\delta^{*}(G)-1},$$
by using the values of $n$, $m$, $\delta^{*}(G)$, $\ell$ and $s$ from (\ref{E7}). This implies the desired equality.

Conversely, assume that we have the equality in the bound. Since $L(G)\cup S(G)\subseteq S$, it suffices to show that $G'\in \Omega$. Since $\gamma_{d}^{oi}(G)=\gamma_{d}^{oi}(G')+\ell-s$, we have $$\gamma_{d}^{oi}(G')=\frac{2n'\delta^{*}(G')-2m'}{2\delta^{*}(G')-1}.$$
Therefore,
$$|[D',V(G')\setminus D']|=\delta^{*}(G')(n'-|D'|)\ \mbox{and}\ |E(G[D'])|=|D'|/2$$
necessarily. This shows that $E(G[D'])$ is a disjoint union of copies of $P_{2}$, and every vertex in $V(G')\setminus D'$ has exactly $\delta^{*}(G')=\delta^{*}(G)=r$ neighbors in $D'\setminus L(G')$. Moreover, $G'[V(G')\setminus D']$ is edgeless, by the definition. Let us denote $E(G[D'])$ by $\{v_{1}v_{1}',\cdots,v_{a}v_{a}',v_{a+1}v_{a+1}',\cdots,v_{a+b}v_{a+b}'\}$ in which $L(G')=\{v_{a+1}',\cdots,v_{a+b}'\}$ and the set of vertices of $G'[V(G')\setminus D']$ by $\{u_{1},\cdots,u_{p}\}$. Since $D'\neq \emptyset$, we have $a+b\geq1$. Moreover, $deg(v_{i}),deg(v_{i}')\geq r$ for all $1\leq i\leq a$. This implies that $pr=|[D',V(G')\setminus D']|\geq2a(r-1)$. Therefore, $G'\in \Omega$, which completes the proof.
\end{proof}

Note that the lower bound $f(\delta^{*}(G))$ given in Theorem \ref{T2} is an increasing function on $\delta^{*}(G)$. Since $\delta^{*}(G)\geq2$, we have a lower bound $f(2)$ on $\gamma_{d}^{oi}(G)$. By choosing $n-1$ instead of $m$ in $f(2)$, for any tree $T$ of order $n$ and size $m$, we immediately conclude the following result.

\begin{corollary}\emph{(\cite{k})}\label{C1}
If $T$ is a tree of order $n$ with $\ell$ leaves and $s$ support vertices, then $\gamma_{d}^{oi}(T)\geq(2n+\ell-s+2)/3$.
\end{corollary}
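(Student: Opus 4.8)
The plan is to specialize Theorem~\ref{T2} to trees and to exploit the monotonicity of its right-hand side in the parameter $\delta^{*}$. Writing
$$f(t)=\frac{2nt-2m+\ell-s}{2t-1},$$
Theorem~\ref{T2} reads $\gamma_{d}^{oi}(T)\geq f(\delta^{*}(T))$, and the goal is to replace the graph-dependent quantity $\delta^{*}(T)$ by the universal lower bound $\delta^{*}(T)\geq 2$ recorded just before the construction of $\Omega$. For this it suffices to show that $f$ is nondecreasing for $t\geq 2$, so that the value $f(2)$ remains a valid (possibly weaker) lower bound.

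First I would establish the monotonicity. Rewriting
$$f(t)=n+\frac{n-2m+\ell-s}{2t-1}$$
and noting that $2t-1>0$ increases with $t$, the quotient $\tfrac{1}{2t-1}$ decreases toward $0$; hence $f$ is nondecreasing in $t$ exactly when the constant $n-2m+\ell-s$ is nonpositive, i.e.\ when $2m\geq n+\ell-s$. For a tree this is immediate after substituting the identity $m=n-1$: the condition becomes $\ell\leq n+s-2$, which holds because $\ell\leq n-1$ for every tree on $n\geq 2$ vertices and $s\geq 1$. Thus $f$ is increasing on $[2,\infty)$ in the tree case.

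Second, combining the monotonicity with $\delta^{*}(T)\geq 2$ gives
$$\gamma_{d}^{oi}(T)\geq f(\delta^{*}(T))\geq f(2)=\frac{4n-2m+\ell-s}{3}.$$
Finally, plugging $m=n-1$ yields
$$f(2)=\frac{4n-2(n-1)+\ell-s}{3}=\frac{2n+\ell-s+2}{3},$$
which is precisely the asserted bound.

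The computations are entirely routine; the only step requiring a little care is the monotonicity claim, where one must control the sign of $n-2m+\ell-s$. I expect this to be the main (and essentially only) obstacle, and it dissolves at once upon invoking the tree identity $m=n-1$ together with the elementary estimates $\ell\leq n-1$ and $s\geq 1$.
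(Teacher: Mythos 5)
Your proposal is correct and follows essentially the same route as the paper: the paper likewise notes that the bound $f(\delta^{*}(G))$ of Theorem~\ref{T2} is increasing in $\delta^{*}$, invokes $\delta^{*}\geq 2$, and substitutes $m=n-1$; you merely make explicit the sign check $n-2m+\ell-s\leq 0$ that the paper leaves as a bare assertion. One small caveat: your justification ``$\ell\leq n-1$ for every tree on $n\geq 2$ vertices'' fails for $T=P_{2}$ (where $\ell=n=2$), although the inequality you actually need, $\ell\leq n+s-2$, still holds there because $s=2$.
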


Jafari Rad and Krzywkowski \cite{jk} proved the following lower bound on the $2$OID number of a graph.

\begin{proposition}\emph{(\cite{jk})}\label{P1}
For any graph $G$ of order $n$ and size $m$, $\gamma_{2}^{oi}(G)\geq n-m/2$. 
\end{proposition}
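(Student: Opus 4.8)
The plan is to prove the bound by a single edge-counting argument applied to a minimum $2$OID set. First I would fix a $\gamma_{2}^{oi}(G)$-set $S$ and write $\overline{S}=V(G)\setminus S$, so that $|\overline{S}|=n-\gamma_{2}^{oi}(G)$. The definition of a $2$OID set supplies two facts at once: every vertex of $\overline{S}$ has at least two neighbors in $S$, and $\overline{S}$ is independent, so no edge of $G$ has both endpoints in $\overline{S}$.

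The second step is to count the edges of the cut $[S,\overline{S}]$. Since $\overline{S}$ induces no edges, every edge incident with a vertex of $\overline{S}$ lands in $[S,\overline{S}]$; and since each of the $|\overline{S}|$ vertices contributes at least two such edges, I obtain $|[S,\overline{S}]|\geq 2|\overline{S}|=2(n-\gamma_{2}^{oi}(G))$. The independence of $\overline{S}$ is exactly what guarantees that these neighbor-edges are charged to the cut without any overlap with edges inside $\overline{S}$ (of which there are none).

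Finally, because the cut edges are among the $m$ edges of $G$, we have $m\geq |[S,\overline{S}]|\geq 2(n-\gamma_{2}^{oi}(G))$. Rearranging gives $\gamma_{2}^{oi}(G)\geq n-m/2$, as desired. There is essentially no hard step here: the whole proof is a one-line double count, and the only point requiring care is the use of independence of $\overline{S}$ to justify the inequality $m\geq |[S,\overline{S}]|$. For completeness one may note that equality forces $G[S]$ to be edgeless and every vertex of $\overline{S}$ to have exactly two neighbors in $S$, a configuration realized for instance by $P_{3}$ and $C_{4}$, but this is not needed to establish the stated bound.
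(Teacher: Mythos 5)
Your proof is correct and follows essentially the same edge-counting argument as the paper, which writes $m=|E(G[S])|+|[S,V(G)\setminus S]|\geq 2(n-|S|)$ for a $\gamma_{2}^{oi}(G)$-set $S$ and rearranges. The only cosmetic difference is that the paper uses independence of $V(G)\setminus S$ to state this as an equality decomposition of $m$, whereas you only need the inequality $m\geq |[S,\overline{S}]|$ (which, as your own argument implicitly shows, holds even without independence).
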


Also, they posed the following open problem.

\begin{p}\emph{(\cite{jk})}
Characterize all graphs $G$ for which, $\gamma_{2}^{oi}(G)=n-m/2$.
\end{p}

In order to solve this problem, we introduce the family $\mathcal{G}$ of graphs $G$ as follows. Let $P$ and $Q$ be two sets of vertices with $|P|\geq 1$ and $|Q|\geq 2$. We define the graph $G$ by $V(G)=P\cup Q$ and joining every vertex in $P$ to exactly two vertices in $Q$.

\begin{proposition}
For any graph $G$ of order $n$ and size $m$, $\gamma_{2}^{oi}(G)=n-m/2$ if and only if $G\in \mathcal{G}$.
\end{proposition}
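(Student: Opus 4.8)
The plan is to prove both directions by making the counting argument behind Proposition \ref{P1} exact, and then reading off precisely when its two constituent inequalities are simultaneously tight. First I would recall the mechanism of the bound $\gamma_{2}^{oi}(G)\ge n-m/2$: for any $2$OID set $S$, writing $\overline{S}=V(G)\setminus S$, independence of $\overline{S}$ gives $m=|E(G[\,S\,])|+|[S,\overline{S}]|$, and since each vertex of $\overline{S}$ has all of its (at least two) neighbours inside $S$ we have $|[S,\overline{S}]|\ge 2|\overline{S}|=2(n-|S|)$; together with $|E(G[\,S\,])|\ge 0$ this yields $|S|\ge n-m/2$.

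For the forward implication I would fix a $\gamma_{2}^{oi}(G)$-set $S$ and assume $\gamma_{2}^{oi}(G)=n-m/2$. Then the above chain must collapse to equalities, forcing $|E(G[\,S\,])|=0$ and $|[S,\overline{S}]|=2|\overline{S}|$ at once. The first says $S$ is independent; the second, combined with the vertexwise bound ``$\ge 2$'', forces every vertex of $\overline{S}$ to have exactly two neighbours, all of them in $S$. Hence $G$ is bipartite with independent parts $S$ and $\overline{S}$, where each vertex of $\overline{S}$ has degree exactly two. Putting $Q=S$ and $P=\overline{S}$ then exhibits $G$ as a member of $\mathcal{G}$, and it remains only to check the cardinality requirements $|P|\ge 1$ and $|Q|\ge 2$; both follow once $\overline{S}\neq\emptyset$, since any vertex of $\overline{S}$ needs two \emph{distinct} neighbours in $S$.

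For the converse I would take $G\in\mathcal{G}$ with its parts $P,Q$ and compute the right-hand side directly: all edges run between $P$ and $Q$ and each of the $|P|$ vertices has degree two, so $m=2|P|$ and thus $n-m/2=|P|+|Q|-|P|=|Q|$. Now $Q$ is itself a $2$OID set, because its complement $P$ is independent and every vertex of $P$ has both of its neighbours in $Q$; therefore $\gamma_{2}^{oi}(G)\le|Q|=n-m/2$, and Proposition \ref{P1} supplies the reverse inequality, giving equality.

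I expect the only delicate points to be the tightness analysis of the forward direction---where one must argue that both ``loss'' terms vanish simultaneously and translate this into the exact degree condition on $\overline{S}$---and the degenerate edgeless case $\overline{S}=\emptyset$ (equivalently $m=0$), which satisfies the equation trivially yet lies outside $\mathcal{G}$. I would dispose of the latter at the outset by assuming $m\ge 1$, so that $\overline{S}\neq\emptyset$ is guaranteed; everything else is routine bookkeeping.
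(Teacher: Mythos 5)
Your argument is correct and is essentially the paper's own proof: both make the count $m=|E(G[S])|+|[S,V(G)\setminus S]|\geq 2(n-|S|)$ exact, deduce that $G[S]$ is edgeless and every vertex outside $S$ has exactly two neighbours (all of them in $S$), and then identify $S$ with $Q$ and $V(G)\setminus S$ with $P$. Your side remark about the edgeless case is in fact a point where you are more careful than the paper: for $m=0$ one has $\gamma_{2}^{oi}(G)=n=n-m/2$ while $G\notin\mathcal{G}$, a degenerate exception the paper's proof silently ignores, so your restriction to $m\geq 1$ is genuinely needed for the statement to be literally true.
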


\begin{proof}
We first present a short proof for the already known bound $\gamma_{2}^{oi}(G)\geq n-m/2$. Let $S$ be a $\gamma_{2}^{oi}(G)$-set. Since $V(G)\setminus S$ is independent and every vertex in $V(G)\setminus S$ has at least two neighbors in $S$, we have
\begin{equation}\label{E6}
m=|E(G[S])|+|[S,V(G)\setminus S]|\geq2(n-|S|).
\end{equation}
The lower bound now immediately follows from (\ref{E6}). Let $G\in \mathcal{G}$. Clearly, $Q$ is a $2$OID set in $G$. Hence, $\gamma_{d}^{oi}(G)\leq|Q|=n-|P|=n-m/2$ and this implies the equality.

Conversely, suppose that $\gamma_{2}^{oi}(G)=n-m/2$, which yields the equality of (\ref{E6}) as well. By the definition, $V(G)\setminus S$ is independent and every vertex from $V(G)\setminus S$ has at least two neighbors in $S$. Thus $|[S,V(G)\setminus S]|\geq2(n-|S|)$. Because the equality holds in (\ref{E6}), graph $G[S]$ is without edges and every vertex from $V(G)\setminus S$ has exactly two neighbors in $S$.
It is now easy to see that, $S$ and $V(G)\setminus S$ can be taken as $Q$ and $P$, respectively, in the descriptions of all graphs in $\mathcal{G}$. Thus, $G\in \mathcal{G}$.
\end{proof}

The $2$OID number of any bipartite graph $G$ with minimum degree at least two, can be bounded from above by half of the order. In the next theorem, we bound $\gamma_{t}^{oi}(G)$ and $\gamma_{d}^{oi}(G)$ for bipartite graphs $G$ from above.

\begin{theorem}\label{T6}
If $G$ is a bipartite graph of order $n$ with $\delta(G)\geq 2$, then
$$\gamma_{t}^{oi}(G)=\gamma_{d}^{oi}(G)\leq\frac{n+\gamma_{t}(G)}{2}.$$
Furthermore, this bound is sharp.
\end{theorem}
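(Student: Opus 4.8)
The plan is to separate the statement into the equality $\gamma_t^{oi}(G)=\gamma_d^{oi}(G)$ and the upper bound. The equality is immediate: since $\delta(G)\geq 2$, the fact recorded in the introduction gives $\gamma_t^{oi}(G)=\gamma_d^{oi}(G)$ at once. Hence it suffices to bound $\gamma_t^{oi}(G)$ from above by $(n+\gamma_t(G))/2$, and I would do this by exhibiting an explicit TOID set.

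First I would take a $\gamma_t(G)$-set $T$ and consider the induced subgraph $H=G[V(G)\setminus T]$. Since $G$ is bipartite with some bipartition $(X,Y)$, the graph $H$ inherits the bipartition $(X\cap V(H),\,Y\cap V(H))$. Let $P$ be the smaller of these two parts, so that $|P|\leq |V(H)|/2=(n-\gamma_t(G))/2$, and put $S=T\cup P$. The point of choosing $P$ as the smaller side is that one side of a bipartition is always a vertex cover of a bipartite graph, and the smaller side has at most half the vertices.

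The next step is to verify that $S$ is indeed a TOID set. It is a total dominating set because $T\subseteq S$ already totally dominates $G$, and enlarging a total dominating set preserves this property. For outer-independence, observe that $V(G)\setminus S=V(H)\setminus P$ is exactly the larger part of the bipartition of $H$, hence independent in $H$; because $H$ is an induced subgraph of $G$, this set is independent in $G$ as well. Counting then gives
$$\gamma_t^{oi}(G)\leq |S|=|T|+|P|\leq \gamma_t(G)+\frac{n-\gamma_t(G)}{2}=\frac{n+\gamma_t(G)}{2},$$
which together with the equality completes the bound.

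The inequality is thus routine once the construction is seen; the genuine work lies in the sharpness claim. Here I would exhibit a bipartite graph with $\delta\geq 2$ attaining the bound, for instance $C_4$, where $\gamma_t(C_4)=2$ and $\gamma_t^{oi}(C_4)=3=(4+2)/2$; for a richer family one checks that the extremal graphs force the two sides of the bipartition of $H$ to have equal size. The step I expect to require the most care is confirming optimality in the extremal family, namely establishing the matching lower bound that shows the ``smaller side'' choice cannot be beaten.
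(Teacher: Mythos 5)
Your proof is correct and follows essentially the same route as the paper: the paper likewise augments a $\gamma_{t}(G)$-set $D$ by the complement (inside $G[V(G)\setminus D]$) of an independent set covering at least half of $V(G)\setminus D$ --- it uses a maximum independent set where you use the larger side of the inherited bipartition --- and then counts exactly as you do, with the equality $\gamma_{t}^{oi}(G)=\gamma_{d}^{oi}(G)$ coming from the $\delta(G)\geq 2$ fact stated in the introduction. Your sharpness witness $C_{4}$ (where $\gamma_{t}(C_{4})=2$ and $\gamma_{t}^{oi}(C_{4})=3$) suffices, the paper merely opting for the infinite family $P_{3k}\square P_{2}$ instead.
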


\begin{proof}
Let $D$ be a $\gamma_{t}(G)$-set. Suppose that $I$ is a maximum independent set in $G[V(G)\setminus D]$. We define $A=\{v\in I: |N(v)\cap D|=1\}$ and $B=\{v\in I: |N(v)\cap D|\geq2\}$. Since $D$ is a dominating set, we have $I=A\cup B$. Since $\delta(G)\geq2$, every vertex in $A$ has at least one neighbor in $(V(G)\setminus D)\setminus I$. Therefore, every vertex in the independent set $I$ has at least two neighbors in $S=D\cup((V(G)\setminus D)\setminus I)$. Since $D$ is a total dominating set in $G$, the subgraph of $G$ induced by $S$ has no isolated vertices. Therefore, $S$ is a $2$OID set in $G$.

On the other hand, $|I|\geq(n-|D|)/2$ because $G[V(G)\setminus D]$ is a bipartite graph. Now we have, $\gamma_{2}^{oi}(G)\leq|S|=n-|I|\leq(n+|D|)/2=(n+\gamma(G))/2$.

That this bound is sharp may be seen as follows. Consider the grid graph (Cartesian product of two paths) $P_{3k}\square P_{2}$ with set of vertices $\{v_{ij}: 1\leq i\leq3k,1\leq j\leq2\}$ in a matrix form, for some positive integer $k$. Clearly, $V(P_{3k}\square P_{2})$ can be partitioned into the subsets $V_{i}=\cup_{j=1}^{2}\{v_{(3i-2)j},v_{(3i-1)j},v_{(3i)j}\}$, for $1\leq i\leq k$. At least four vertices in each $V_{i}$ must belong to every DOID set in $P_{3k}\square P_{2}$. Therefore, $\gamma_{2}^{oi}(P_{3k}\square P_{2})\geq4k$. On the other hand, $\cup_{i=1}^{k}\{v_{(3i-2)2},v_{(3i-1)1},v_{(3i-1)2},v_{(3i)1}\}$ is a DOID set in $P_{3k}\square P_{2}$ of cardinality $4k$ and so, $\gamma_{2}^{oi}(P_{3k}\square P_{2})=4k$. Moreover, $A=\cup_{i=1}^{k}\{v_{(3i-1)1},v_{(3i-1)2}\}$ is a $\gamma_{t}(P_{3k}\square P_{2})$-set. Thus $\gamma_{t}(P_{3k}\square P_{2})=|A|=2k$. Therefore, $\gamma_{2}^{oi}(P_{3k}\square P_{2})=(n+\gamma_{t}(P_{3k}\square P_{2}))/2$.
\end{proof}

Note that both the conditions ``being bipartite'' and ``$\delta\geq 2$'' are necessary in Theorem \ref{T6}. For instance, the bound from the theorem do not hold for $K_{1,n-1}$ and $K_{n}$ when $n\geq5$.

\end{document}